\newtheorem{theorem}[subsection]{Theorem}
\newtheorem{lemma}[subsection]{Lemma}
\theoremstyle{definition}
\numberwithin{equation}{section} \setcounter{tocdepth}{1}
\newcommand{\F}{{\mathcal F}}
\newcommand{\A}{{\mathcal A}}
\newcommand{\dd}{{\mathrm d}}
\newcommand{\bea}{\begin{eqnarray}}
\newcommand{\eea}{\end{eqnarray}}
\newcommand{\N}{\mathbb{N}}
\newcommand{\Z}{\mathbb{Z}}
\newcommand{\R}{\mathbb{R}}
\newcommand{\EE}{\mathbb{E}}
\newcommand{\PP}{\mathbb{P}}
\def \> {\Rightarrow}
\def \0 {\emptyset}
\def\eins{{\mathchoice {1\mskip-4mu\mathrm l}
{1\mskip-4mu\mathrm l}
{1\mskip-4.5mu\mathrm l} {1\mskip-5mu\mathrm l}}}
\begin{document}
\title[Asymptotics for a class of iterated random cubic operators]{Asymptotics for a class of iterated random cubic operators}

\author{A.J. Homburg, U.U. Jamilov, M. Scheutzow}

 \address{A.\ J.\ Homburg\\ KdV Institute for Mathematics, University of Amsterdam, Science park 107, 1098 XG Amsterdam, Netherlands\newline Department of Mathematics, VU University Amsterdam, De Boelelaan 1081, 1081 HV Amsterdam, Netherlands}
 \email{a.j.homburg@uva.nl}

  \address{U.\ U.\ Jamilov\\ V.I. Romanovskiy Institute of Mathematics, Academy of Sciences of Uzbekistan,
81, Mirzo Ulugbek str., 100170, Tashkent, Uzbekistan.}
\email{jamilovu@yandex.ru}

\address{M.\ Scheutzow \\ Institut f\"ur Mathematik, MA 7-5, Fakult\"at II,
        Technische Universit\"at Berlin, Stra\ss e des 17.~Juni 136, 10623 Berlin, FRG;}
\email {ms@math.tu-berlin.de}

\begin{abstract}
	We consider a class of cubic stochastic operators that are motivated by models
	for evolution of frequencies of genetic types in populations.
	We take populations with three mutually exclusive genetic types.

The long term dynamics of single maps, starting with a generic initial condition where
in particular all genetic types occur with positive frequency,
is asymptotic to equilibria where either only one genetic type survives, or
where all three genetic types occur.

We consider a family of independent and identically distributed maps from this class
and study its long term dynamics, in particular its random point attractors.
The long term dynamics of the random composition of maps is asymptotic, almost surely, to equilibria.
In contrast to the deterministic system, for generic initial conditions these can be equilibria with one or two or three types present
(depending only on the distribution).
\end{abstract}

\subjclass[2010] {37N25,37H10}

\keywords{random Volterra operators, random point attractors}

\maketitle

\section{Introduction}

%

Quadratic stochastic operators (QSO's) and their application in biology were first considered by Bernstein \cite{Br}.
They arise as models of genetic evolution that describe the dynamics of gene frequencies in mathematical population genetics.
These and similar models are also considered in their own right from a dynamical systems perspective.

Consider a population of $m$ different genetic types, where individuals possess a single type.
The frequency of occurrence of the different types is given by a probability distribution $x = (x_1,\ldots,x_m)$ and thus by a point
$x$ on the simplex $\Delta^{m-1}$.
Let $p_{ij,k}$ be the conditional probability that two individuals of type $i$ and $j$ given they interbreed, produce offspring of type $k$.
The expected gene distribution in the next generation is modeled by a
quadratic operator $V: \Delta^{m-1} \to \Delta^{m-1}$,
\[
(V(x))_k = \sum_{i,j=1}^m p_{ij,k} x_i x_j.
\]
The quadratic operator is a Volterra operator if $p_{ij,k} =  0$ for any $k \not\in \{i,j\}$. That is, the genetic type of an offspring
is a copy of one of its parents.

Several publications treat dynamics of Volterra quadratic operators.
A randomization is considered in \cite{JKM17}. That paper studies random dynamical systems obtained from
sequences of independent and identically distributed quadratic operators.
Under some conditions it is shown that trajectories converge to one of the vertices of $\Delta^{m-1}$ almost surely.

A generalization is the concept of
cubic stochastic operators (CSO).
A CSO is given by a map $V: \Delta^{m-1} \to \Delta^{m-1}$ of the form
\[
(V(x))_l = \sum_{i,j,k=1}^m p_{ijk,l} x_i x_j x_k
\]
with $p_{ijk,l} \ge 0$ and $\sum_{l=1}^m  p_{ijk,l} = 1$.
A CSO is called a Volterra operator if $p_{ijk,l}=0$ if $l \not\in \{i,j,k\}$.
We will consider a family of Volterra CSO's with coefficients of heredity
\[
p_{ijk,l} = \left\{  \begin{array}{ll}  1, &  \delta_{il} + \delta_{jl} + \delta_{kl} = 3, \\
                                         \theta, &  \delta_{il} + \delta_{jl} + \delta_{kl} = 2, \delta_{ij} + \delta_{ik} + \delta_{jk} =1, \\
                                         1-\theta, & \delta_{il} + \delta_{jl} + \delta_{kl} = 1, \delta_{ij} + \delta_{ik} + \delta_{jk} =1, \\
                                         \frac{1}{3}, & \delta_{il} + \delta_{jl} + \delta_{kl} = 1, \delta_{ij} + \delta_{ik} + \delta_{jk} =0, \\
                                         0, & \textrm{otherwise},
   \end{array}   \right.
\]
where $\delta_{ij}$ is the Kronecker symbol and $\theta \in [0,1]$.
Properties of such CSO's are studied in \cite{JKL17}.
%


We will take the approach of \cite{JKM17} and consider random iterations of CSO's.
For this we restrict to populations with three types, that is, to maps on the two dimensional simplex $\Delta^2$.
The resulting map $V_\theta:\Delta^2 \to \Delta^2$ is given by
\begin{equation}\label{cneop}
\left\{\begin{array}{l}
   (V_\theta(x))_1=x_1\big( x_1^2+3\theta x_1 (x_2+x_3)+3(1-\theta)(x_2^2 + x_3^2) + 2x_2 x_3\big), \\[2mm]
   (V_\theta(x))_2 =x_2\big(x_2^2+3\theta x_2 (x_3+x_1)+3(1-\theta)(x_3^2 + x_1^2) + 2x_3 x_1\big), \\[2mm]
   (V_\theta(x))_3=x_3\big( x_3^2+3\theta x_3 (x_1+x_2)+3(1-\theta)(x_1^2 + x_2^2) + 2x_1 x_2\big).
 \end{array}\right.
\end{equation}
The system can be seen as a discrete version of Kolmogorov equations for three interacting species.
It is easy to see that $V_{2/3}$ is the identity map.
It is shown in \cite{JKL17} that orbits of $V_\theta$, $\theta \ne \frac{2}{3}$, converge to one of the equilibria for which
none, one or two of the genetic types go extinct.

The simplex $\Delta^2$ consists of different invariant regions as depicted in Figure~\ref{f:parti}.
Write
\[
G_1=\{y \in \Delta^2: y_1 \ge y_2 \ge y_3\};
\]
$G_1$ is a subset of $\Delta^2$ which is invariant under each $V_\theta$.
The simplex $\Delta^2$ is the union of $G_1$ and its symmetric images under permutations of coordinates, see Figure~\ref{f:parti}.
\begin{figure}[!ht]
	\begin{tikzpicture}
	\draw (0,0)--(4,0)--($(2,{2*sqrt(3)})$)--(0,0);
	\draw (0,0)--($(3,{sqrt(3)})$);
	\draw ($(2,{2*sqrt(3)})$)--(2,0);
	\draw ($(1,{sqrt(3)})$)--(4,0);
	
	\draw (0,0) node[below]{$\mathbf{e}_1$};
	\draw (4,0) node[below]{$\mathbf{e}_2$};
	\draw ($(2,{2*sqrt(3)})$) node[above]{$\mathbf{e}_3$};
	
	\draw (1.8,0.4) node[left]{$G_1$};
	\draw (1.4,1.1) node[left]{$G_2$};
	\draw (1.9,2) node[left]{$G_3$};
	\draw (2.1,2) node[right]{$G_4$};
	\draw (2.5,1.1) node[right]{$G_5$};
	\draw (2.3,.4) node[right]{$G_6$};
	
	\end{tikzpicture}\hspace{2cm}
	\begin{tikzpicture}
	\draw (0,0)--(4,0)--($(4,{4*sqrt(3)/3})$)--(0,0);

	\draw (0,0) node[below]{$\mathbf{e}_1$};
	\draw (4,0) node[below]{$c$};
	\draw ($(4,{4*sqrt(3)/3})$) node[above]{$C$};
	
	\draw (3.2,0.8) node[left]{$G_1$};
	\draw (2,0) node[below]{$\Gamma_{12}$};
	\draw ($(4,{2*sqrt(3)/3})$) node[right]{$M_{12}$};
	\draw ($(2,{2.2*sqrt(3)/3})$) node[above]{$M_{23}$};
	
	\end{tikzpicture}
	\caption{Left: the partition of $\Delta^2$ in invariant triangles. Right: notation for $G_1$.}\label{f:parti}
\end{figure}
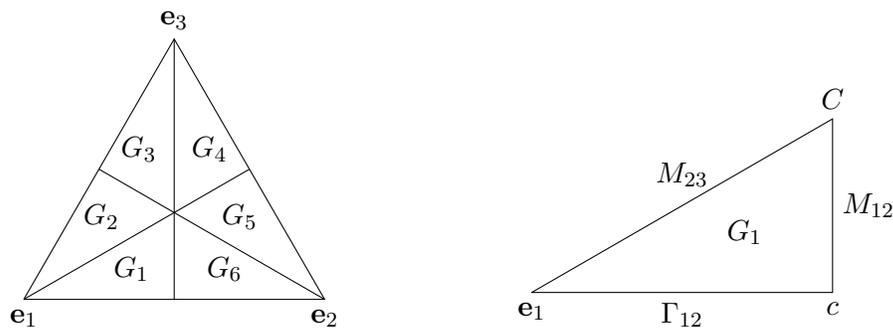
The boundary of $G_1$ is the union of three edges,
$M_{12} = \{y \in \Delta^2: y_1 = y_2 \ge y_3, \}$,  $M_{23} = \{y \in \Delta^2: y_1 \ge y_2 = y_3, \}$, $\Gamma_{12} = \{y \in \Delta^2: y_1 \ge y_2,  y_3=0 \}$.
Let $C =(1/3,1/3,1/3)$
be the center of $\Delta^2$, $c=(1/2,1/2,0)$,
$\mathbf{e}_1=(1,0,0)$ be the two other vertices of $G_1$.
These vertices are the fixed points of $V_\theta$ restricted to $G_1$, for any $\theta \ne \frac{2}{3}$ \cite{JKL17}.
The set of fixed points of $V_\theta$ on $\Delta^2$ consists of these points and their symmetric images, in total $7$ points.
Let $\A$ denote this set of fixed points.
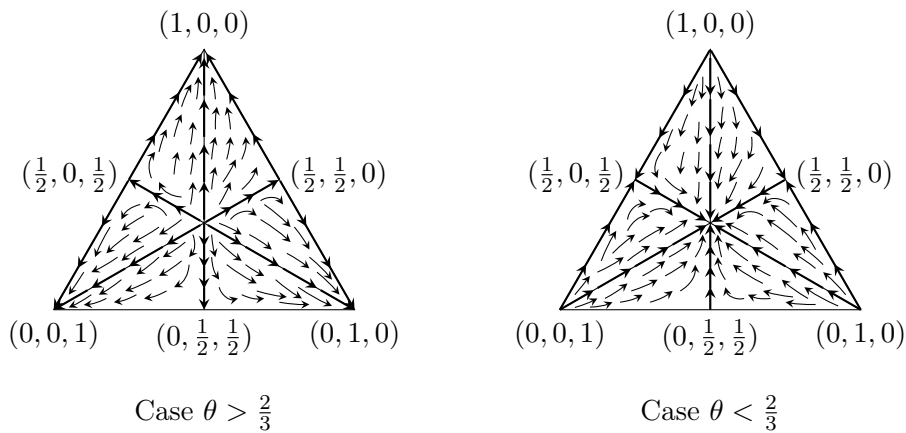
\begin{figure}[h!]
	\begin{tikzpicture}
	\draw (0,0)--(4,0)--($(2,{2*sqrt(3)})$)--(0,0);
	\draw (0,0)--($(3,{sqrt(3)})$);
	\draw ($(2,{2*sqrt(3)})$)--(2,0);
	\draw ($(1,{sqrt(3)})$)--(4,0);
	
	\foreach \x in {1, 1.32, 1.62}
	\draw[-stealth,thick,black]  ($(\x, {sqrt(3)*\x})$) -- ($(\x+ 0.35,{sqrt(3)*\x+ sqrt(3)*0.35})$); 
	\foreach \x in {0, 0.32, 0.64}
	\draw[stealth-,thick,black]  ($(\x, {sqrt(3)*\x})$) -- ($(\x+ 0.35,{sqrt(3)*\x+ sqrt(3)*0.35})$);
	
	\foreach \x in {3, 2.68, 2.38}
	\draw[-stealth,thick,black]  ($(\x, {-sqrt(3)*\x+4*sqrt(3)})$) -- ($(\x- 0.35,{-sqrt(3)*\x+ 4*sqrt(3)+sqrt(3)*0.35})$);  
	\foreach \x in {4, 3.68, 3.36}
	\draw[stealth-,thick,black] ($(\x, {-sqrt(3)*\x+4*sqrt(3)})$) -- ($(\x- 0.35,{-sqrt(3)*\x+ 4*sqrt(3)+sqrt(3)*0.35})$);
	
	\foreach \y in {1.2, 1.7, 2.3, 2.85}
	\draw[-stealth,thick,black]  (2, \y) -- (2,\y+0.5); 
	\foreach \y in {0, .28, .56,.84}
	\draw[stealth-,thick,black]  (2, \y) -- (2,\y+0.3);
	
	\foreach \x in {2.02, 2.32, 2.64}
	\draw[-stealth,thick,black]($(\x, {1/3*sqrt(3)*\x})$) -- ($(\x+ 0.35,{1/3*sqrt(3)*\x+ 1/3*sqrt(3)*0.35})$); 
	\foreach \x in {0, 0.5, 1,1.5}
	\draw[stealth-,thick,black]  ($(\x, {1/3*sqrt(3)*\x})$) -- ($(\x+ 0.45,{1/3*sqrt(3)*\x+ 1/3*sqrt(3)*0.45})$);

	\foreach \x in {2.5,3,3.5,4}
	\draw[stealth-,thick,black] ($(\x, {-1/3*sqrt(3)*\x+4/3*sqrt(3)})$) -- ($(\x- 0.45,{-1/3*sqrt(3)*\x+ 4/3*sqrt(3)+1/3*sqrt(3)*0.45})$);   
	\foreach \x in {1.95,1.65,1.35}
	\draw[-stealth,thick,black]  ($(\x, {-1/3*sqrt(3)*\x+4/3*sqrt(3)})$) -- ($(\x- 0.35,{-1/3*sqrt(3)*\x+ 4/3*sqrt(3)+1/3*sqrt(3)*0.35})$);
	
	\draw (0,0) node[below]{$(0,0,1)$};
	\draw (2,0) node[below]{$(0,\frac{1}{2},\frac{1}{2})$};
	\draw (4,0) node[below]{$(0,1,0)$};
	\draw (3,1.8) node[right]{$(\frac{1}{2},\frac{1}{2},0)$};
	\draw ($(2,{2*sqrt(3)})$) node[above]{$(1,0,0)$};
	\draw (1,1.8) node[left]{$(\frac{1}{2},0,\frac{1}{2})$};
	\draw (2,-1) node[below]{Case $\theta >\frac{2}{3}$};

	\path[-stealth,black] (2.1,1.6) edge[bend left=5] node { } (2.15,1.9);
	\path[-stealth,black] (2.5,1.6) edge[bend right=15] node { } (2.7,1.9);
	\path[-stealth,black] (2.5,2) edge[bend right=15] node { } (2.55,2.3);
	\path[-stealth,black] (2.1,2) edge[bend left=5] node { } (2.15,2.3);
	\path[-stealth,black] (2.3,2) edge[bend right=5] node { } (2.35,2.3);
	\path[-stealth,black] (2.35,2.4) edge[bend right=5] node { } (2.3,2.7);
	\path[-stealth,black] (2.15,2.4) edge[bend left=5] node { } (2.17,2.7);
	\path[-stealth,black] (2.15,2.8) edge[bend right=5] node { } (2.1,3.1);
	\path[-stealth,black] (2.2,1.4) edge[bend right=5] node { } (2.35,1.7);
	\path[-stealth,black] (2.25,1.7) edge[bend right=5] node { } (2.42,2);
	
	\path[-stealth,black] (1.82,1.8) edge[bend right=5] node { } (1.85,2.1);
	\path[-stealth,black] (1.82,2.2) edge[bend right=5] node { } (1.85,2.5);
	\path[-stealth,black] (1.6,2.3) edge[bend left=5] node { } (1.7,2.6);
	\path[-stealth,black] (1.7,2.7) edge[bend left=5] node { } (1.83,3);
	\path[-stealth,black] (1.9,2.8) edge[bend right=5] node { } (1.92,3.1);
	\path[-stealth,black] (1.4,2.1) edge[bend left=5] node { } (1.5,2.4);
	\path[-stealth,black] (1.7,1.5) edge[bend left=30] node { } (1.4,1.7);
	\path[-stealth,black] (1.3,1.75) edge[bend left=30] node { } (1.35,2.1);
	\path[-stealth,black] (1.6,1.8) edge[bend left=5] node { } (1.65,2.1);
	\path[-stealth,black]  (1.82,1.4) edge[bend right=5] node { } (1.85,1.7);
	
	\path[-stealth,black] (1.85,1) edge[bend left=30] node { } (1.8,.7);
	\path[-stealth,black] (1.75,.9) edge[bend left=5] node { } (1.45,.7);
	\path[-stealth,black] (1.4,.7) edge[bend left=5] node { } (1.1,.47);
	\path[-stealth,black] (.9,.4) edge[bend left=5] node { } (.6,.2);
	\path[-stealth,black] (.5,.2) edge[bend left=5] node { } (.2,.03);
	\path[-stealth,black] (1.8,.6) edge[bend left=30] node { } (1.6,.3);
	\path[-stealth,black] (1.5,.25) edge[bend left=10] node { } (1.2,.15);
	\path[-stealth,black] (1.1,.2) edge[bend left=10] node { } (0.8,.1);
	\path[-stealth,black] (.7,.15) edge[bend left=10] node { } (0.4,.065);
	\path[-stealth,black] (1.3,.4) edge[bend left=5] node { } (.95,.25);
	\path[-stealth,black] (1.7,.65) edge[bend left=5] node { } (1.4,.45);
	
	\path[-stealth,black] (.85,.7) edge[bend left=5] node { } (.5,.5);
	\path[-stealth,black] (.75,1.1) edge[bend right=5] node { } (.55,.7);
	\path[-stealth,black] (.4,.5) edge[bend left=1] node { } (.2,.2);
	\path[-stealth,black] (1.7,1.2) edge[bend left=5] node { } (1.4,1);
	\path[-stealth,black] (1.2,1.25) edge[bend right=5] node { } (.9,1.05);
	\path[-stealth,black] (1.2,.85) edge[bend right=5] node { } (.9,.65);
	\path[-stealth,black] (1,1) edge[bend right=5] node { } (.7,0.75);
	\path[-stealth,black] (1.4,1.1) edge[bend left=5] node { } (1.05,0.9);
	\path[-stealth,black] (1.5,1.25) edge[bend right=45] node { } (1.2,1.35);
	\path[-stealth,black] (1.15,1.4) edge[bend right=25] node { } (.9,1.2);
	
	\path[-stealth,black] (2.1,1) edge[bend right=5] node { } (2.15,.7);
	\path[-stealth,black] (2.1,.7) edge[bend right=15] node { } (2.2,.4);
	\path[-stealth,black] (2.2,.3) edge[bend right=45] node { } (2.45,.15);
	\path[-stealth,black] (2.5,.2) edge[bend right=15] node { } (2.8,.15);
	\path[-stealth,black] (3,.1) edge[bend right=5] node { } (3.4,.07);
	\path[-stealth,black] (2.2,.9) edge[bend right=5] node { } (2.5,.65);
	\path[-stealth,black] (2.55,.6) edge[bend right=5] node { } (2.8,.45);
	\path[-stealth,black] (2.95,.4) edge[bend right=5] node { } (3.3,.15);
	\path[-stealth,black] (3.25,.3) edge[bend right=5] node { } (3.7,.05);
	\path[-stealth,black] (2.3,.6) edge[bend right=5] node { } (2.7,.35);
	\path[-stealth,black] (2.7,.25) edge[bend right=5] node { } (3.1,.17);
	
	\path[-stealth,black] (3.5,.5) edge[bend left=5] node { } (3.8,.2);
	\path[-stealth,black] (3.1,.75) edge[bend left=5] node { } (3.4,.55);
	\path[-stealth,black] (2.6,1) edge[bend left=5] node { } (2.9,.8);
	\path[-stealth,black] (2.2,1.15) edge[bend left=5] node { } (2.5,1.0);
	\path[-stealth,black] (2.75,1.2) edge[bend left=5] node { } (3.15,.9);
	\path[-stealth,black] (2.8,1.4) edge[bend left=5] node { } (3.15,1.1);
	\path[-stealth,black] (2.35,1.2) edge[bend left=45] node { } (2.7,1.25);
	\path[-stealth,black] (2.75,1.45) edge[bend left=45] node { } (3.05,1.4);
	\path[-stealth,black] (3.2,1.1) edge[bend left=5] node { } (3.4,.8);
	\path[-stealth,black] (3.4,.7) edge[bend left=5] node { } (3.7,.4);
	\end{tikzpicture} \hspace{1cm}
	\begin{tikzpicture}
	\draw (0,0)--(4,0)--($(2,{2*sqrt(3)})$)--(0,0);
	\draw (0,0)--($(3,{sqrt(3)})$);
	\draw ($(2,{2*sqrt(3)})$)--(2,0);
	\draw ($(1,{sqrt(3)})$)--(4,0);
	
	\foreach \x in {1, 1.32, 1.64}
	\draw[stealth-,thick,black]  ($(\x, {sqrt(3)*\x})$) -- ($(\x+ 0.35,{sqrt(3)*\x+ sqrt(3)*0.35})$); 
	\foreach \x in {0, 0.32, 0.64}
	\draw[-stealth,thick,black]  ($(\x, {sqrt(3)*\x})$) -- ($(\x+ 0.35,{sqrt(3)*\x+ sqrt(3)*0.35})$);
	
	\foreach \x in {3, 2.68, 2.36}
	\draw[stealth-,thick,black]  ($(\x, {-sqrt(3)*\x+4*sqrt(3)})$) -- ($(\x- 0.35,{-sqrt(3)*\x+ 4*sqrt(3)+sqrt(3)*0.35})$);  
	\foreach \x in {4, 3.68, 3.36}
	\draw[-stealth,thick,black] ($(\x, {-sqrt(3)*\x+4*sqrt(3)})$) -- ($(\x- 0.35,{-sqrt(3)*\x+ 4*sqrt(3)+sqrt(3)*0.35})$);
	
	\foreach \y in {1.2, 1.7, 2.3, 2.85}
	\draw[stealth-,thick,black]  (2, \y) -- (2,\y+0.5); 
	\foreach \y in {0, .28, .56,.84}
	\draw[-stealth,thick,black]  (2, \y) -- (2,\y+0.3);
	
	\foreach \x in {2.02, 2.32, 2.64}
	\draw[stealth-,thick,black]($(\x, {1/3*sqrt(3)*\x})$) -- ($(\x+ 0.35,{1/3*sqrt(3)*\x+ 1/3*sqrt(3)*0.35})$); 
	\foreach \x in {0, 0.5, 1,1.5}
	\draw[-stealth,thick,black]  ($(\x, {1/3*sqrt(3)*\x})$) -- ($(\x+ 0.45,{1/3*sqrt(3)*\x+ 1/3*sqrt(3)*0.45})$);

	\foreach \x in {2.5,3,3.5,4}
	\draw[-stealth,thick,black] ($(\x, {-1/3*sqrt(3)*\x+4/3*sqrt(3)})$) -- ($(\x- 0.45,{-1/3*sqrt(3)*\x+ 4/3*sqrt(3)+1/3*sqrt(3)*0.45})$);   
	\foreach \x in {1.95,1.65,1.35}
	\draw[stealth-,thick,black]  ($(\x, {-1/3*sqrt(3)*\x+4/3*sqrt(3)})$) -- ($(\x- 0.35,{-1/3*sqrt(3)*\x+ 4/3*sqrt(3)+1/3*sqrt(3)*0.35})$);
	
	\draw (0,0) node[below]{$(0,0,1)$};
	\draw (2,0) node[below]{$(0,\frac{1}{2},\frac{1}{2})$};
	\draw (4,0) node[below]{$(0,1,0)$};
	\draw (3,1.8) node[right]{$(\frac{1}{2},\frac{1}{2},0)$};
	\draw ($(2,{2*sqrt(3)})$) node[above]{$(1,0,0)$};
	\draw (1,1.8) node[left]{$(\frac{1}{2},0,\frac{1}{2})$};
	\draw (2,-1) node[below]{Case $\theta <\frac{2}{3}$};
	
	\path[stealth-,black] (2.1,1.6) edge[bend left=5] node { } (2.15,1.9);
	\path[stealth-,black] (2.5,1.6) edge[bend right=15] node { } (2.7,1.9);
	\path[stealth-,black] (2.5,2) edge[bend right=15] node { } (2.55,2.3);
	\path[stealth-,black] (2.1,2) edge[bend left=5] node { } (2.15,2.3);
	\path[stealth-,black] (2.3,2) edge[bend right=5] node { } (2.35,2.3);
	\path[stealth-,black] (2.35,2.4) edge[bend right=5] node { } (2.3,2.7);
	\path[stealth-,black] (2.15,2.4) edge[bend left=5] node { } (2.17,2.7);
	\path[stealth-,black] (2.15,2.8) edge[bend right=5] node { } (2.1,3.1);
	\path[stealth-,black] (2.2,1.4) edge[bend right=5] node { } (2.35,1.7);
	\path[stealth-,black] (2.25,1.7) edge[bend right=5] node { } (2.42,2);
	
	\path[stealth-,black] (1.82,1.8) edge[bend right=5] node { } (1.85,2.1);
	\path[stealth-,black] (1.82,2.2) edge[bend right=5] node { } (1.85,2.5);
	\path[stealth-,black] (1.6,2.3) edge[bend left=5] node { } (1.7,2.6);
	\path[stealth-,black] (1.75,2.7) edge[bend left=5] node { } (1.83,3);
	\path[stealth-,black] (1.88,2.8) edge[bend right=5] node { } (1.9,3.1);
	\path[stealth-,black] (1.4,2.1) edge[bend left=5] node { } (1.5,2.4);
	\path[stealth-,black] (1.7,1.55) edge[bend left=15] node { } (1.4,1.75);
	\path[stealth-,black] (1.3,1.75) edge[bend left=30] node { } (1.35,2.1);
	\path[stealth-,black] (1.6,1.8) edge[bend left=5] node { } (1.65,2.1);
	\path[stealth-,black]  (1.82,1.4) edge[bend right=5] node { } (1.85,1.7);
	
	\path[stealth-,black] (1.9,1) edge[bend left= 15] node { } (1.8,.7);
	\path[stealth-,black] (1.75,.9) edge[bend left=5] node { } (1.45,.7);
	\path[stealth-,black] (1.4,.65) edge[bend left=5] node { } (1.1,.45);
	\path[stealth-,black] (.9,.35) edge[bend left=5] node { } (.6,.15);
	\path[stealth-,black] (.5,.15) edge[bend left=5] node { } (.2,.03);
	\path[stealth-,black] (1.8,.6) edge[bend left=30] node { } (1.6,.25);
	\path[stealth-,black] (1.5,.25) edge[bend left=10] node { } (1.2,.1);
	\path[stealth-,black] (1.1,.2) edge[bend left=10] node { } (0.8,.1);
	\path[stealth-,black] (.7,.1) edge[bend left=10] node { } (0.4,.035);
	\path[stealth-,black] (1.3,.4) edge[bend left=5] node { } (.95,.25);
	\path[stealth-,black] (1.7,.65) edge[bend left=5] node { } (1.4,.45);
	
	\path[stealth-,black] (.85,.7) edge[bend left=5] node { } (.5,.5);
	\path[stealth-,black] (.75,1.05) edge[bend right=5] node { } (.55,.65);
	\path[stealth-,black] (.4,.45) edge[bend left=1] node { } (.2,.2);
	\path[stealth-,black] (1.7,1.2) edge[bend left=5] node { } (1.4,1);
	\path[stealth-,black] (1.2,1.25) edge[bend right=30] node { } (.9,1.05);
	\path[stealth-,black] (1.2,.85) edge[bend right=5] node { } (.9,.65);
	\path[stealth-,black] (1,1) edge[bend right=5] node { } (.7,0.75);
	\path[stealth-,black] (1.4,1.1) edge[bend right=5] node { } (1.05,0.9);
	\path[stealth-,black] (1.5,1.25) edge[bend right=45] node { } (1.2,1.3);
	\path[stealth-,black] (1.15,1.4) edge[bend right=45] node { } (.9,1.2);
	
	\path[stealth-,black] (2.1,1) edge[bend right=5] node { } (2.15,.7);
	\path[stealth-,black] (2.1,.7) edge[bend right=15] node { } (2.2,.4);
	\path[stealth-,black] (2.2,.3) edge[bend right=45] node { } (2.45,.15);
	\path[stealth-,black] (2.5,.2) edge[bend right=15] node { } (2.8,.15);
	\path[stealth-,black] (3,.1) edge[bend right=5] node { } (3.4,.07);
	\path[stealth-,black] (2.2,.9) edge[bend right=5] node { } (2.5,.65);
	\path[stealth-,black] (2.55,.6) edge[bend right=5] node { } (2.8,.45);
	\path[stealth-,black] (2.95,.4) edge[bend right=5] node { } (3.3,.15);
	\path[stealth-,black] (3.25,.3) edge[bend right=5] node { } (3.7,.05);
	\path[stealth-,black] (2.3,.6) edge[bend right=5] node { } (2.7,.35);
	\path[stealth-,black] (2.7,.25) edge[bend right=5] node { } (3.1,.17);
	
	\path[stealth-,black] (3.5,.5) edge[bend left=5] node { } (3.8,.2);
	\path[stealth-,black] (3.1,.75) edge[bend left=5] node { } (3.4,.55);
	\path[stealth-,black] (2.6,1) edge[bend left=5] node { } (2.9,.8);
	\path[stealth-,black] (2.2,1.15) edge[bend left=5] node { } (2.5,1.0);
	\path[stealth-,black] (2.75,1.2) edge[bend left=5] node { } (3.15,.9);
	\path[stealth-,black] (2.8,1.4) edge[bend left=5] node { } (3.15,1.1);
	\path[stealth-,black] (2.35,1.2) edge[bend left=45] node { } (2.7,1.25);
	\path[stealth-,black] (2.7,1.4) edge[bend left=45] node { } (3.05,1.4);
	\path[stealth-,black] (3.2,1.1) edge[bend left=5] node { } (3.4,.8);
	\path[stealth-,black] (3.2,.85) edge[bend left=5] node { } (3.5,.6);
	\end{tikzpicture}
	\caption{Trajectories of $V_\theta$: the dynamics is different for $\theta > \frac{2}{3}$ and $\theta < \frac{2}{3}$. }\label{f:dyn}
\end{figure}
%
%

Figure~\ref{f:dyn} indicates the different dynamics of $V_\theta$ for $\theta < \frac{2}{3}$ and $\theta>\frac{2}{3}$. See  \cite{JKL17}.
For $\theta < \frac{2}{3}$, $C$ is attracting, $\mathbf{e}_1$ is repelling. Moreover, positive orbits in $\mathrm{int}(G_1)$ converge to $C$. For $\theta>\frac{2}{3}$ the stability of the fixed point is reversed, so $C$ is repelling, $\mathbf{e}_1$ is attracting and
positive orbits in $\mathrm{int}(G_1)$ converge to $\mathbf{e}_1$. The fixed point $c$ is still a saddle fixed point for all $\theta \ne \frac{2}{3}$.

\section{{Main results}}
\label{sec:mainresult}

Let $\mu$ be a probability measure on $[0,1]$ equipped with the Borel $\sigma$-algebra and let $\Theta_n,\,n \in \Z$ be a two-sided sequence of
independent $[0,1]$-valued random variables with law $\mu$.
A sequence of random parameters $\Theta_n$ yields an element $\omega \in \Omega = [0,1]^\mathbb{Z}$.
Denote the left shift operator on $\Omega$ by $\vartheta$: if $\omega = (\Theta_n)_{n\in\mathbb{Z}}$, then  $\vartheta \omega = (\Theta_{n+1})_{n\in\mathbb{Z}}$.
The family $V_{\Theta_n}$, $n \in \Z$ is a family of independent and identically distributed maps taking values in $\Delta^2$ equipped with the
topology inherited by $\R^2$, which defines a
discrete {\em random dynamical system} (RDS) $\varphi_n (x,\omega) := V_{\Theta_n}\circ \cdots \circ  V_{\Theta_1} (x)$ for $n \ge 0$ and
$\varphi_{n} (\omega,x):= V_{\Theta_{n+1}}^{-1} \circ \cdots \circ   V_{\Theta_0}^{-1} (x)$ for $n < 0$.
We also write $\varphi_n (x)$, suppressing the dependence on $\omega$ from the notation.

If $\Theta$ is a random variable with distribution $\mu$,
then we write $\EE g(\Theta)$ for the expected value of $g(\Theta)$ instead of $\int g(y)\,\dd \mu(y)$ in case $g$ is a measurable function for which the integral is defined (possibly $\infty$ or $-\infty$).


We aim to identify the  long term dynamics, in particular the forward point attractor, of the RDS $\varphi$.
A set $A \subset \Delta^2 \times \Omega$
is a
forward point attractor
if
\begin{enumerate}
	\item
$A$  is a compact random set, i.e. $A(\omega) = A \cap (\Delta^2 \times \{\omega\})$ is a nonempty compact set so that $d(x , A(\omega))$ depends measurably on $\omega$ for any $x \in \Delta^2$;
   \item
$A$
is strictly
$\varphi_n$-invariant, i.e.
\[
\varphi_n (A (\omega))  = A (\vartheta^n \omega)
\]
almost surely;
    \item
$A$
attracts points, i.e.
\[
\lim_{n\to \infty}
 d (\varphi_n (x)   , A (\vartheta^n \omega)) = 0
\]
almost surely, for every $x \in \Delta^2$.
\end{enumerate}
A forward point attractor is called minimal if it is the minimal set with these properties. It is shown in  \cite{CS} that minimal point attractors in the weak or pullback sense
always exist. For forward point attractors this question is open but in the particular set-up of the following theorem we will see that a minimal forward  point attractor exists.

The following result describes the minimal forward point attractor  of the RDS $\varphi$.
We will exclude the very special (and uninteresting) case $\mu(\{2/3\})=1$ which means that $\varphi_{n}=\mathrm{Id}$ almost surely for every $n \in \Z$.

\begin{theorem}\label{t:A}
Let $\mu \neq \delta_{2/3}$ be a probability measure on $[0,1]$. Then the minimal forward point attractor of the RDS $\varphi$ is given by $\A$. Moreover,
every trajectory converges almost surely (to one of the points in the set $\A$).
\end{theorem}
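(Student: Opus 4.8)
The plan is to reduce the two assertions of Theorem~\ref{t:A} to the single statement that $\varphi_n(x)$ converges almost surely to a point of $\A$ for every $x\in\Delta^2$, and then to prove this by analysing a one-dimensional ``tug of war'' on the invariant edges of the fundamental triangle $G_1$. First I would note that both halves of the theorem follow from this convergence statement. If every trajectory converges a.s.\ to a point of the finite, $V_\theta$-invariant set $\A$, then the constant random set $\omega\mapsto\A$ is closed, strictly $\varphi$-invariant and attracts every point, hence is a forward point attractor; and for \emph{any} forward point attractor $A$ and any $p\in\A$ one has $\varphi_n(p)\equiv p$, so $d(p,A(\vartheta^n\omega))\to0$ a.s., which by Poincar\'e recurrence for the ergodic shift $\vartheta$ forces $d(p,A(\cdot))=0$ a.s.; since $\A$ is finite, $\A\subseteq A$, so $\A$ is the minimal forward point attractor. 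Next I would use the $S_3$-symmetry: as each coordinate permutation conjugates every $V_\theta$ to itself and the $S_3$-images of $G_1$ cover $\Delta^2$, it suffices to prove convergence for $x\in G_1$, where $\A\cap G_1=\{C,c,\mathbf{e}_1\}$ and the three edges $M_{23}=[C,\mathbf{e}_1]$, $M_{12}=[C,c]$, $\Gamma_{12}=[c,\mathbf{e}_1]$ are invariant (each $V_\theta$ is a homeomorphism of $G_1$ fixing the three vertices and mapping each edge to itself).

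The heart of the proof is the dynamics on an invariant edge, a closed interval with both endpoints fixed; for $\theta\neq2/3$, $V_\theta$ restricts to it as an increasing homeomorphism with no interior fixed point, hence pushes the open edge monotonically toward one endpoint. Linearising \eqref{cneop} gives the endpoint multipliers $(1+3\theta)/3$ at $C$, $3(1-\theta)$ at $\mathbf{e}_1$, $(4-3\theta)/2$ at $c$ along $M_{12}$, and $3\theta/2$ at $c$ along $\Gamma_{12}$, each of which is $<1$ on exactly one side of $2/3$ and $>1$ on the other. Setting $\Lambda_e:=\EE[\log(\text{multiplier at }e)]\in[-\infty,\infty)$, a Jensen-inequality computation done separately on the three edges (for $M_{23}$: $\Lambda_C\ge0\Rightarrow\EE\Theta\ge2/3$ and $\Lambda_{\mathbf{e}_1}\ge0\Rightarrow\EE\Theta\le2/3$, with equality forced only for $\mu=\delta_{2/3}$) shows the two endpoint exponents of an edge are never both $\ge0$. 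Thus each edge is \emph{monostable} (exactly one endpoint $e_-$ has $\Lambda_{e_-}<0$) or \emph{bistable} (both exponents negative). In the monostable case I would show that every interior orbit converges a.s.\ to $e_-$ by the standard argument for monotone interval random dynamical systems: near $e_-$ the logarithm of the distance to $e_-$ is a random walk with strictly negative drift; near $e_+$ the analogous walk has nonnegative drift, so the orbit leaves every neighbourhood of $e_+$; and since $\{x:\varphi_n(x)\to e_-\}$ is an interval containing $e_-$, it must contain the whole open edge. In the bistable case I would first show, by a synchronisation argument, that $p^*(\omega):=\sup\{x:\varphi_n(x)\to e_-\}$ equals $q^*(\omega):=\inf\{x:\varphi_n(x)\to e_+\}$ a.s., producing a single random separatrix $r(\omega)$ with $V_{\Theta_1}(r(\omega))=r(\vartheta\omega)$ and $\mu$-stationary law $\rho$; then I would prove $\rho$ has no atom, since a largest atom at $x_0$ would, via $\rho(\{x_0\})=\int\rho(\{V_\theta^{-1}(x_0)\})\,\dd\mu(\theta)$, force infinitely many distinct atoms of equal mass along the backward orbit of $x_0$ --- using that a single $V_{\theta_0}$, $\theta_0\neq2/3$, is not bistable --- contradicting $\rho(\Delta^2)=1$. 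Hence $\PP(x=r(\omega))=0$ for each fixed $x$, and $\varphi_n(x)$ converges a.s.\ here as well.

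For the two-dimensional interior of $G_1$ I would use that every $V_\theta$ preserves $\mathrm{int}(G_1)$, that for a single $V_\theta$ every interior orbit converges to $C$ (if $\theta<2/3$) or $\mathbf{e}_1$ (if $\theta>2/3$), and that the multipliers above identify $c$ as a saddle whose stable and unstable manifolds lie in $\partial G_1$. The transverse multipliers at $C$ and $\mathbf{e}_1$ are again $(1+3\theta)/3$ and $3(1-\theta)$, so the same Lyapunov--Jensen dichotomy applies: in the monostable case I would squeeze an interior point --- in a suitable partial order on $G_1$ for which $V_\theta$ is order-preserving and every interior point lies between two interior points of $M_{23}$ --- between two orbits converging to the same vertex; in the bistable case the random separatrix is a curve whose law charges no point, again by atom propagation. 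Combining the edge and interior analyses yields almost-sure convergence to $\A$ for every $x\in G_1$, hence for every $x\in\Delta^2$.

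I expect the bistable case of the one-dimensional tug of war to be the main obstacle: proving that $p^*=q^*$ and, especially, that the law of the random separatrix is non-atomic --- this is exactly what upgrades ``almost every initial condition converges'' to ``every initial condition converges almost surely'', and the atom-propagation argument above, together with its analogue for the separatrix curve in $\mathrm{int}(G_1)$, is the delicate step underlying the theorem.
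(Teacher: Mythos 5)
Your reduction to the pointwise convergence statement, the minimality argument via the fact that $\A$ consists of common fixed points, the $S_3$-symmetry reduction to $G_1$, and the Jensen dichotomy for the endpoint Lyapunov exponents all match the paper. Your one-dimensional edge analysis (monotone interval RDS, random separatrix, atom propagation for the stationary law) is a genuinely different route from the paper's; note, however, that even there you will still need the paper's two technical workhorses: the local stable manifold theorem to get \emph{positive} probability of capture near an endpoint with negative exponent (negative drift of $\log$-distance alone is a linearized statement), and a supermartingale/Lyapunov-function argument for almost sure escape in the critical case of \emph{zero} exponent, where the nonlinear correction to the random walk has the unfavourable sign (this is exactly Lemma~\ref{l:lyapgezero}); "nonnegative drift, so the orbit leaves every neighbourhood" is not automatic when the drift vanishes. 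Likewise, "the basin is an interval, hence contains the whole open edge" does not follow from the interval structure alone; it again requires the capture-plus-recurrence argument.

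The genuine gap is your treatment of $\mathrm{int}(G_1)$. You posit a partial order on $G_1$ preserved by every $V_\theta$ in which every interior point lies between two interior points of $M_{23}$. No such order is constructed, and in general none can exist: by Theorem~\ref{t:A(x)}, when both $\EE\log(\tfrac32\Theta)<0$ and $\EE\log(2-\tfrac32\Theta)<0$ (which is compatible with, say, $\Lambda_{\mathbf e_1}<0\le\Lambda_C$), every $x\in\mathrm{int}(G_1)$ converges to the saddle point $c$ with positive probability, whereas every interior point of $M_{23}$ converges to $\{C,\mathbf e_1\}$; a squeeze between two $M_{23}$-orbits converging to the same vertex would force the interior orbit to that vertex almost surely, a contradiction. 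Your fallback, a "random separatrix curve whose law charges no point", is a substantial two-dimensional claim with no analogue of the interval order structure to support it, and it too ignores the possible capture at $c$. The paper avoids all of this by arguing directly in the interior: the stable manifold theorem gives a uniform $\varepsilon>0$ probability of convergence once the orbit enters a neighbourhood of any vertex with negative Lyapunov exponents (and at least one of $C,\mathbf e_1$ is such, by the Jensen dichotomy); Lemma~\ref{l:lyapgezero} gives almost sure escape from neighbourhoods of vertices with a nonnegative exponent; orbits almost surely leave any compact subset of $\mathrm{int}(G_1)$ away from the vertices; and a repeated-trials argument combines these into almost sure convergence. You would need to replace your order-theoretic interior argument by something of this kind.
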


\begin{proof}
Since the set of fixed points of $V_\theta$ contains  $\A$ for every $\theta \in [0,1]$ it follows that $\A$ is almost surely contained in any forward (and in any pullback or weak)  point attractor.

To show the converse inclusion, let $x \in \Delta^2$. We need to show that $V_{\Theta_n}\circ \cdots \circ V_{\Theta_1}(x)$ converges to the set $\A$ almost surely.
By symmetry it suffices to consider the RDS on $G_1$.

We start with calculations of the Lyapunov exponents 
at the vertices $\mathbf{e}_1,c,C$ of $G_1$.
Recall that a Lyapunov exponent at a point $x$ is a limit point $\lim_{n\to\infty} \frac{1}{n} \log \| D ( V_{\Theta_n}\circ \cdots \circ V_{\Theta_1}) (x)  v \|$
for a $v \ne 0$, and is used to determine expected contraction or expansion rates of nearby trajectories. See \cite{A98}.

Write $(V_\theta(x))_1 = x_1 W_1 (x)$. Consider the permutation $\sigma (x_1,x_2,x_3) = (x_2,x_3,x_1)$ and let $W_2 =W_1 \circ \sigma$, $W_3 = W_1\circ \sigma^2$.
With this notation,
\[
D V_\theta (x) =
\left(
\begin{array}{ccc}
W_1 (x) + x_1 \frac{\partial}{\partial x_1} W_1 (x) &  x_1  \frac{\partial}{\partial x_2} W_1 (x) & x_1 \frac{\partial}{\partial x_3} W_1 (x)
\\
x_2 \frac{\partial}{\partial x_1} W_2 (x) &  W_2 + x_2  \frac{\partial}{\partial x_2} W_2 (x) & x_1 \frac{\partial}{\partial x_3} W_2(x)
\\
x_3 \frac{\partial}{\partial x_1} W_3 (x) &  x_3  \frac{\partial}{\partial x_2} W_3 (x) & W_3 (x) + x_3 \frac{\partial}{\partial x_3} W_3 (x)
\end{array}
\right).
\]
A calculation shows
\begin{align*}
D V_\theta (C) &=
\left(
\begin{array}{ccc}
\frac{11}{9} + \frac{2}{3}\theta & \frac{8}{9} - \frac{1}{3}\theta & \frac{8}{9} - \frac{1}{3} \theta \\
\frac{8}{9} - \frac{1}{3}\theta     & \frac{11}{9} + \frac{2}{3}\theta &    \frac{8}{9} - \frac{1}{3}\theta \\
\frac{8}{9} - \frac{1}{3}\theta  & \frac{8}{9} - \frac{1}{3}\theta  & \frac{11}{9} + \frac{2}{3}\theta
\end{array}
\right),
\end{align*}
\[
D V_\theta (c) =
\left(
\begin{array}{ccc}
\frac{3}{2} + \frac{3}{4}\theta & \frac{3}{2} - \frac{3}{4}\theta & \frac{1}{2} + \frac{3}{4} \theta \\
\frac{3}{2} - \frac{3}{4}\theta & \frac{3}{2} + \frac{3}{4}\theta & \frac{1}{2} + \frac{3}{4} \theta \\
0 & 0 & 2 - \frac{3}{2} \theta
\end{array}
\right),
\]
\[
D V_\theta (\mathbf{e}_1) =
\left(
\begin{array}{ccc}
3 & 3\theta & 3 \theta \\
0 & 3 (1-\theta) & 0 \\
0 & 0 & 3 (1-\theta)
\end{array}
\right).
\]
By $S_3$-equivariance, $D V_\theta (C)$ acting on $T_C \Delta^2$
is a multiple of the identity.
At $C$, $DV_\theta (C)$ has a multiple eigenvalue; a calculation shows the eigenvalue is $\frac{1}{3} + \theta$.
Likewise  $D V_\theta (\mathbf{e}_1)$ is a multiple of the identity on $T_{\mathbf{e}_1} \Delta^2$ with eigenvalue $3 (1-\theta)$.
By  $S_3$-equivariance, $DV_\theta (c)$ on $T_c \Delta^2$ is diagonalizable.
The two eigenvalues are $\frac{3}{2} \theta$ with eigenvector $(1,-1,0)$ and $2 - \frac{3}{2} \theta$ with eigenvector $(1,1,-2)$.
Note that zero eigenvalues occur for $\theta = 0$ at $c$ and for $\theta=1$ at $\mathbf{e}_1$.


We now prove that the RDS has negative Lyapunov exponents at at least one of the vertices $\mathbf{e}_1$, $C$.
By Birkhoff's ergodic theorem,  the Lyapunov exponents at $\mathbf{e}_1$ and $C$ equal
$\EE \log (3 (1-\Theta))$ and $\EE \log (\frac{1}{3} + \Theta)$ respectively.
%
Using Jensen's inequality, we find that
\begin{align*}
\EE \log (3 (1-\Theta))& \le \log (3 (1-\EE \Theta)),
\\
\EE \log \big(\frac{1}{3} + \Theta\big)&\le \log \big(\frac{1}{3} + \EE \Theta\big).
\end{align*}
If $\EE \Theta \neq 2/3$, then exactly one of the two expressions on the right hand side is (strictly) negative. If $\EE \Theta=2/3$, then both expressions on the right hand side
are zero and both inequalities are in fact strict due to the assumption $\mu \neq \delta_{2/3}$.
Therefore at least one of $\EE \log(3 (1-\Theta))$ and $\EE \log(\frac{1}{3} + \Theta)$ is (strictly) negative.
We conclude that either $\mathbf{e}_1$, or $C$, or both, have negative Lyapunov exponents.




It remains to prove that for $x \in G_1$, $\varphi_n (x)$ converges to one of the equilibria in $\{  \mathbf{e}_1, c, C\}$.
Consider $x \in \mathrm{int}(G_1)$.
Let $\ast \in \{  \mathbf{e}_1, c, C\}$.
Assume that the RDS has negative Lyapunov exponents at $\ast$.
By the local stable manifold theorem \cite[Theorem~(5.1)]{R79}, 
if $\mathcal{U}$ is a small neighborhood of $\ast$ and $x \in \mathrm{int}(G_1) \cap \mathcal{U}$,
we have $\PP (\lim_{n\to\infty} \varphi_n (x) = \ast) \ge \varepsilon$ for some $\varepsilon>0$.
Note that the integrability condition to apply \cite[Theorem~(5.1)]{R79} is satisfied, compare \cite{SV}.
By Lemma~\ref{l:lyapgezero},
if the RDS has nonnegative Lyapunov exponents at $\ast$,
then $\PP (\varphi_n (x) \not \in \mathcal{U} \textrm{ for some }n\in \mathbb{N}) = 1$.
It is further clear that, if $\mathcal{V}$ is the complement in $\mathrm{int}(G_1)$ of a small neighborhood of $\{  \mathbf{e}_1, c, C\}$,
for $x \in \mathcal{V}$ we have $\PP (\varphi_n (x) \not \in \mathcal{V} \textrm{ for some }n\in \mathbb{N}) = 1$.

These properties and the observation that at least one equilibrium in $\{  \mathbf{e}_1, c, C\}$ has negative Lyapunov exponents together easily imply that for
$x \in \mathrm{int}(G_1)$, $\varphi_n (x)$ converges to one of the equilibria in $\{  \mathbf{e}_1, c, C\}$
with probability one.

The same type of reasoning can be used for the RDS restricted to the invariant edges $M_{12}, M_{23}, \Gamma_{12}$ of $G_1$.
Consider for instance $M_{12}$. Then the RDS restricted to $M_{12}$ has a Lyapunov exponent $\EE \log(2 - \frac{3}{2}\Theta)$ at $c$ and a Lyapunov exponent
$\EE\log(\frac{1}{3}+\Theta)$ at $C$. As above, one of these Lyapunov exponents is negative. The above reasoning can now be followed.
\end{proof}

The previous theorem does not provide any information about the stability of the points in the set $\A$. Ideally, we would like to identify, for each $x \in \Delta^2$, the smallest (deterministic)
subset $A (x)$ of  $\A$ for which
\[
\PP \big( \lim_{n \to \infty} d(\varphi_n(x),A(x)) = 0 \big) > 0.
\]
Of course, for each $x \in \A$ we have $A(x)=\{x\}$, but what about other $x \in \Delta^2$?
By symmetry, it suffices to consider $x \in G_1$.
It will turn out that the set $A(x)$ depends on the distribution $\mu$.
We have the following result.

\begin{theorem} \label{t:A(x)}
Assume $\mu \neq \delta_{2/3}$. 
\begin{itemize}
\item[a)]  For $x \in \mathrm{int}(G_1)$, we have
\begin{itemize}
\item[i)] $\mathbf{e}_1 \in A(x)$ iff $\EE \log \big( 3(1-\Theta)\big) <0$;
\item[ii)] $c \in A(x)$ iff both $\EE \log \big( \frac{3}{2} \Theta \big) <0$ and $\EE \log \big( 2 - \frac{3}{2} \Theta \big) <0$;
\item[iii)]  $C \in A(x)$ iff $\EE \log \big( \frac{1}{3} + \Theta \big) <0$.
\end{itemize}
\item[b)] For $x \in M_{12} \setminus \{ C,c\}$, we have
\begin{itemize}
\item[i)] $C \in A(x)$ iff $\EE \log \big( \frac{1}{3} + \Theta \big) <0$;
\item[ii)] $c \in A(x)$ iff $\EE \log \big( 2 - \frac{3}{2} \Theta \big) <0$.
\end{itemize}
\item[c)] For $x \in M_{23}\setminus \{\mathbf{e}_1,C\} $, we have
\begin{itemize}
\item[i)] $\mathbf{e}_1 \in A(x)$ iff $\EE \log \big( 3(1-\Theta)\big) <0$;
\item[ii)] $C \in A(x)$ iff $\EE \log \big( \frac{1}{3} + \Theta \big) <0$.
\end{itemize}
\item[d)] For $x \in \Gamma_{12}\setminus \{ \mathbf{e}_1,c\}$, we have
\begin{itemize}
\item[i)] $\mathbf{e}_1 \in A(x)$ iff $\EE \log \big( 3(1-\Theta)\big) <0$;
\item[ii)] $c \in A(x)$ iff $\EE \log \big( \frac{3}{2} \Theta \big) <0$.
\end{itemize}
\end{itemize}
\end{theorem}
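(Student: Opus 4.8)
\emph{Overall plan.} The plan is to treat each equilibrium $\ast\in\{\mathbf{e}_1,c,C\}$ together with each invariant piece of $G_1$ through it (the interior $\mathrm{int}(G_1)$, or one of the edges $M_{12},M_{23},\Gamma_{12}$) separately, and to decide membership of $\ast$ in $A(x)$ by the sign of the Lyapunov exponents of $\varphi$ at $\ast$ in the directions tangent to that piece. First note that by Theorem~\ref{t:A} the orbit $\varphi_n(x)$ converges a.s.\ to a point of $\{\mathbf{e}_1,c,C\}$ when $x\in G_1$, and to one of the two vertices of the relevant edge when $x$ lies on an edge, so $A(x)$ is a well-defined subset of $\{\mathbf{e}_1,c,C\}$ and only membership has to be determined. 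The exponents needed are those already computed in the proof of Theorem~\ref{t:A}: at $\mathbf{e}_1$ the exponent in every direction of $\Delta^2$ is $\EE\log(3(1-\Theta))$; at $C$ it is $\EE\log(\tfrac13+\Theta)$; at $c$ the exponent along $(1,-1,0)$ (tangent to $\Gamma_{12}$) is $\EE\log(\tfrac32\Theta)$ while along $(1,1,-2)$ (tangent to $M_{12}$, transverse to $\Gamma_{12}$) it is $\EE\log(2-\tfrac32\Theta)$. Matching each piece to the exponents in its tangent directions reproduces exactly the list in the statement, so it remains to prove, for each case, the two implications.

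\emph{Necessity.} Suppose the relevant exponent(s) at $\ast$ are not all negative, i.e.\ the top Lyapunov exponent of $\varphi$ at $\ast$ along the piece through $x$ is $\ge 0$. Then Lemma~\ref{l:lyapgezero} gives, for every $y\ne\ast$ in a small neighbourhood $\mathcal U$ of $\ast$, that $\PP(\varphi_n(y)\notin\mathcal U\text{ for some }n)=1$; equivalently $y\mapsto\PP(\varphi_n(y)\in\mathcal U\ \forall n)$ vanishes off $\ast$. If $\varphi_n(x)\to\ast$, then from some finite (random) time $N$ on the shifted orbit started at $\varphi_N(x)$ stays in $\mathcal U$; conditioning on $\mathcal F_N=\sigma(\Theta_1,\dots,\Theta_N)$, using the Markov property and that $\varphi_N(x)\ne\ast$ a.s.\ (because $V_\theta$ maps the open piece into itself), we get $\PP(\varphi_n(x)\to\ast)=0$, hence $\ast\notin A(x)$. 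The only nonroutine ingredient here is measurability of the function above, which is standard.

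\emph{Sufficiency.} Now assume all the relevant exponents at $\ast$ are negative. The local stable manifold theorem \cite[Theorem~(5.1)]{R79} (with integrability checked as in the proof of Theorem~\ref{t:A}) provides a neighbourhood $\mathcal U\ni\ast$ and $\varepsilon>0$ with $\PP(\varphi_n(y)\to\ast)\ge\varepsilon$ for every $y$ in $\mathcal U$ intersected with the piece through $x$. It then suffices to prove an accessibility statement: from $x$ the orbit enters $\mathcal U$ inside that piece with positive probability; the Markov property then gives $\PP(\varphi_n(x)\to\ast)>0$. For $\ast=\mathbf{e}_1$ the hypothesis $\EE\log(3(1-\Theta))<0$ forces $\mu((2/3,1])>0$, so there is $\theta_+\in\operatorname{supp}(\mu)\cap(2/3,1]$; for $\theta>2/3$ the deterministic map $V_{\theta}$ drives every orbit on the piece through $x$ to $\mathbf{e}_1$, so $V_{\theta_+}^N(x)$ lies deep inside $\mathcal U$ for some $N$, and by continuity $V_{\Theta_N}\circ\cdots\circ V_{\Theta_1}(x)\in\mathcal U$ on $\{\Theta_1,\dots,\Theta_N\in J\}$ for a short interval $J\ni\theta_+$ with $\mu(J)>0$, an event of probability $\mu(J)^N>0$. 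The case $\ast=C$ (using $\EE\log(\tfrac13+\Theta)<0\Rightarrow\mu([0,2/3))>0$) and the edge cases for $c$ ($c$ on $M_{12}$ via $\EE\log(2-\tfrac32\Theta)<0\Rightarrow\mu((2/3,1])>0$; $c$ on $\Gamma_{12}$ via $\EE\log(\tfrac32\Theta)<0\Rightarrow\mu([0,2/3))>0$) are handled in the same way, since the deterministic dynamics on the one-dimensional invariant edges are known.

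\emph{The main obstacle} is accessibility of $c$ from $x\in\mathrm{int}(G_1)$: no single $V_\theta$ takes an interior orbit to $c$ ($\theta<2/3$ sends it to $C$, $\theta>2/3$ to $\mathbf{e}_1$), so a genuine controllability argument is required. One exploits that $\EE\log(\tfrac32\Theta)<0$ and $\EE\log(2-\tfrac32\Theta)<0$ together force $\operatorname{supp}(\mu)$ to meet both $[0,2/3)$ and $(2/3,1]$. Maps with $\theta>2/3$ contract $x_3$ — indeed $(V_\theta x)_3=x_3W_3(x)$ with $W_3(x)-1=-(3\theta-2)\bigl(x_1(x_1-x_3)+x_2(x_2-x_3)\bigr)\le 0$ on $G_1$ — pushing the orbit toward the invariant edge $\{x_3=0\}$; once the orbit is close to that edge and away from the vertices, finitely many maps with $\theta<2/3$ bring $x_1$ toward $1/2$ (on $\{x_3=0\}$ the point $c$ attracts such maps) while $x_3$ grows by at most a bounded factor over finitely many steps. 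The technical heart of the proof is arranging these two phases so that, with positive probability, the orbit lands inside a fixed ball $B(c,r_0)$ on which the stable manifold estimate applies, in particular keeping $x_1$ bounded away from $1$ throughout the first phase; once such a ball is reached with positive probability, the local stable manifold theorem — applicable precisely because \emph{both} exponents at $c$ are negative — finishes the argument exactly as in the other cases.
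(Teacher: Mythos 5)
Your overall architecture is the same as the paper's: decide membership of each vertex in $A(x)$ by the sign of the Lyapunov exponents tangent to the relevant invariant piece, use the local stable manifold theorem plus an accessibility statement for sufficiency, and Lemma~\ref{l:lyapgezero} for necessity. The necessity direction and the sufficiency arguments for $\mathbf{e}_1$, $C$, and for $c$ restricted to the edges $M_{12}$, $\Gamma_{12}$ are all fine and match the paper. The one genuinely hard point — reaching a neighbourhood of $c$ from $x\in\mathrm{int}(G_1)$ with positive probability — you correctly isolate, but the mechanism you sketch for it does not work, and this is precisely the part to which the paper devotes the bulk of Lemma~\ref{l:Asaddle}.

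The problem with your two-phase scheme is that the "angular" coordinate is not controllable at first order. Under iteration of $V_{\theta}$ with $\theta>2/3$, the ratio $x_3/x_2$ evolves by the factor $W_3/W_2$ with $W_2-W_3=(3\theta-2)(x_2-x_3)$; since $x_2-x_3\to 0$ geometrically as the orbit approaches $\mathbf{e}_1$, the product $\prod_n W_3(x^{(n)})/W_2(x^{(n)})$ converges, so $x_3/x_2$ decreases only to a \emph{positive} limit depending on the starting point. In other words, phase 1 brings the orbit near $\Gamma_{12}$ only by bringing it near $\mathbf{e}_1$, never into the region "close to $\Gamma_{12}$ and away from the vertices". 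In phase 2 the maps with $\theta<2/3$ have $W_3>W_2$, so $x_3/x_2$ increases again; and since $DV_\theta(\mathbf{e}_1)$ is a scalar multiple of the identity on $T_{\mathbf{e}_1}\Delta^2$, the escape from $\mathbf{e}_1$ preserves the direction $(x_2,x_3)/\|(x_2,x_3)\|$ to first order, so by the time $x_1-x_2$ has become small, $x_3$ is again of order one and the orbit is heading to $C$, not $c$. The same degeneracy (conformal linearization) holds at $C$, so no first-order argument can steer the orbit onto a neighbourhood of an edge interior; one must track the quadratic terms. This is exactly what the paper's Lemma~\ref{l:Asaddle} does: after the near-identity coordinate change $H_\theta$ that removes the quadratic part of $F_\theta$ near $C$, it computes that a block $V_\theta^M\circ V_\nu^N$ ($\theta>2/3$, $\nu<2/3$) returning to a fundamental domain around $C$ strictly increases the slope $k$ of the line $y=kx$ through the point, so finitely many such blocks move the orbit close to $M_{12}$, after which $V_\theta^O$ with $\theta>2/3$ carries it along $M_{12}$ toward $c$. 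Without a second-order computation of this kind (near $C$, near $\mathbf{e}_1$, or near the edge), your accessibility claim for $c$ remains unproved, and with it part a)~ii) of the theorem.
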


Before proving the theorem, let us consider some special cases.
Take an iterated function system where $\mu$ is supported on two values $\theta - \frac{1}{10}, \theta+\frac{1}{10}$ for a parameter $\theta$:
$\mu = \frac{1}{2} \delta_{\theta - \frac{1}{10}} +  \frac{1}{2}\delta_{\theta+\frac{1}{10}}$.
If $\theta < \frac{2}{3} - \frac{1}{10}$, $C$ is an attracting vertex for both maps $V_{\theta-\frac{1}{10}}$ and $V_{\theta + \frac{1}{10}}$.
If $\theta > \frac{2}{3} + \frac{1}{10}$, $\mathbf{e}_1$ is an attracting vertex for both maps $V_{\theta-\frac{1}{10}}$ and $V_{\theta + \frac{1}{10}}$.
In both cases the dynamics of the iterated function system is clear.
Transitions in the dynamics will take place if $\theta$ runs from $\frac{2}{3}-\frac{1}{10}$ to $\frac{2}{3} + \frac{1}{10}$.
We base our observations on Figure~\ref{f:lyap} that shows graphs of the Lyapunov exponents at the three vertices $c,C,\mathbf{e}_1$.
 From this we see that for varying $\theta$ there are intervals where subsequently $\{ C\}$, $\{ C,\mathbf{e}_1\}$, $\{ C,\mathbf{e}_1,c\}$, $\{c,\mathbf{e}_1\}$ and $\{\mathbf{e}_1\}$
are the vertices with negative maximal Lyapunov exponent.  Boundary points of these intervals are parameter values with zero Lyapunov exponent at a vertex.
In particular there is an open interval of parameter values where all three vertices have negative Lyapunov exponents.
This gives rise to intermingled basins (see \cite{kan94}) for the basins of three attractors: every point in $\mathrm{int(G_1)}$ has positive probability to converge
to each of the three vertices of $G_1$.
Compare \cite{GH} for the context of iterated function systems on an interval and an explanation of the terminology.
Remarkably, the random system can converge to $c$ with positive probability for every starting point $x\in \mathrm{int}(G_1)$, while
this convergence is not possible for the deterministic system.
\begin{figure}[!ht]

\begin{center}
\includegraphics[width=12cm]{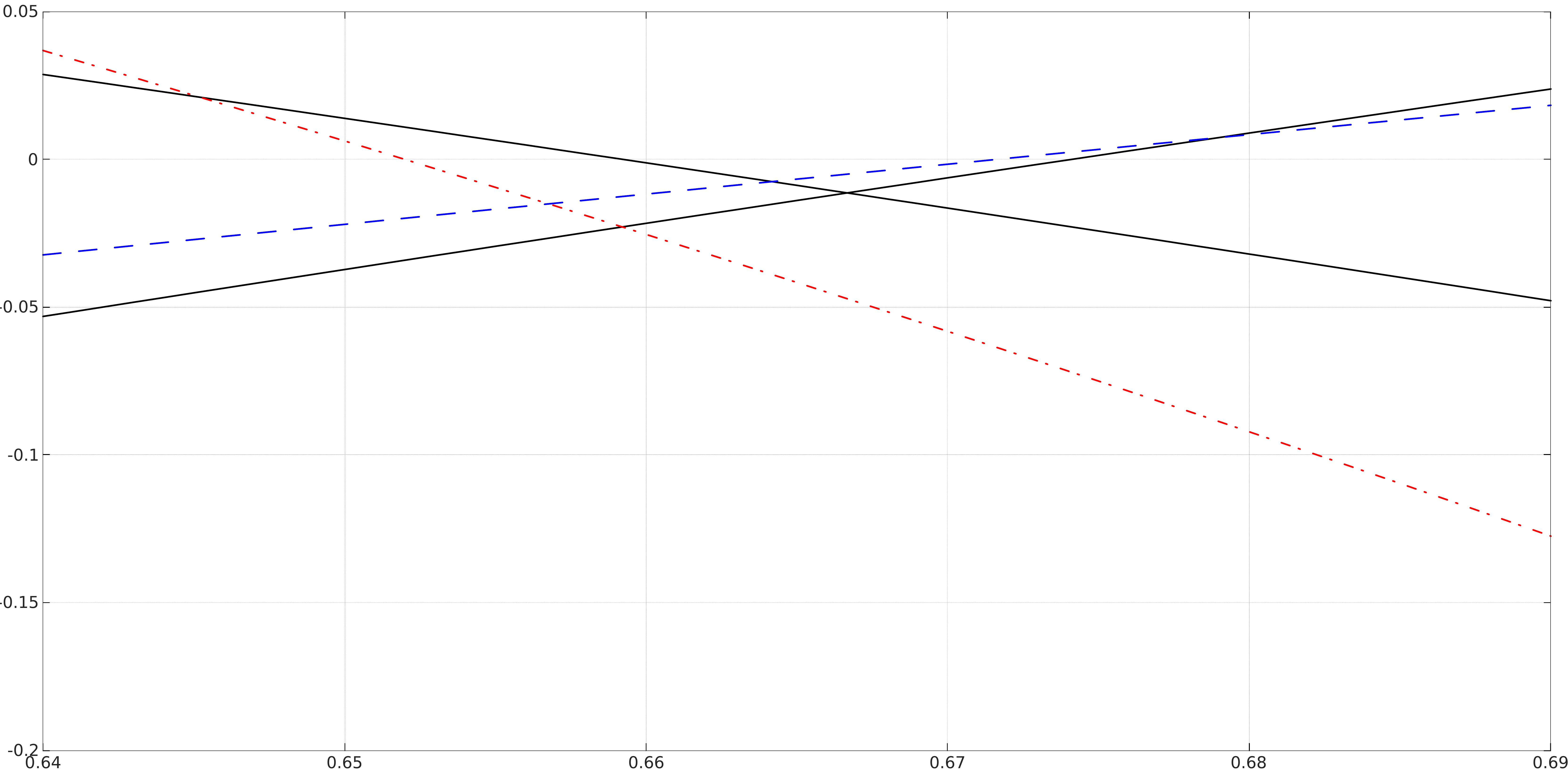}
\caption{Graphs of Lyapunov exponents at the three vertices $c,C,\mathbf{e}_1$. The two solid curves depict two Lyapunov exponents of the random saddle $c$,
the increasing dashed  curve depicts the Lyapunov exponent of $C$ (the two exponents are equal)
the remaining decreasing dash-dotted curve depicts the Lyapunov exponent of $\mathbf{e}_1$ (the two exponents are equal).
\label{f:lyap}}
\end{center}

\end{figure}

Assume that $\mu$ is Lebesgue measure, so that $\Theta$ is chosen uniformly from $[0,1]$.
A calculation gives the following identities:
\begin{align*}
\EE \log (3(1-\Theta)) &= \log(3) - 1 > 0,
\\
\EE \log(\frac{3}{2} \Theta) &= \log(\frac{3}{2})-1 < 0,
\\
\EE \log(2-\frac{3}{2}\Theta) &= \frac{5}{3}\log(2)-1>0,
\\
\EE \log(\frac{1}{3} + \Theta) &= \log (\frac{4}{3} 2^{2/3})-1 < 0.
\end{align*}
It follows that for any $x \in \textrm{int}(G_1)$, $\varphi_n (x)$ converges to the center point $C$ almost surely, as $n\to \infty$.


Assume that $\Theta$ is uniformly distributed on $[\frac{2}{3}-a,\frac{2}{3}+a]$, $0<a\le \frac{1}{3}$, so that $\EE \Theta = \frac{2}{3}$.
Since $\log(x)$ is a concave function,  $\EE \log (3(1-\Theta))$, $\EE \log(\frac{3}{2} \Theta)$,
$\EE \log(2-\frac{3}{2}\Theta)$ and
$\EE \log(\frac{1}{3} + \Theta)$ are all negative.
The situation is similar to the example of the iterated function system.
For any $x \in \mathrm{int}(G_1)$, trajectories $\varphi_n (x)$ converge almost surely
and each point $\mathbf{e}_1,c,C$ occurs as limit point with positive probability.

\begin{proof}[Proof of Theorem~\ref{t:A(x)}]
We will only prove item~a). The proof of item~a) gives all the ingredients for the other items, which can then easily be concluded.

Let us first consider the vertex $\mathbf{e}_1$.
Recall from the proof of Theorem~\ref{t:A} that at $\mathbf{e}_1$ the  Lyapunov exponents equal $\EE \log (3 (1-\Theta))$.
So the condition $\EE \log (3 (1-\Theta)) <0$ means that the Lyapunov exponents at $\mathbf{e}_1$ are negative.
The following statement was already observed in the proof of Theorem~\ref{t:A} and is a consequence of the local stable manifold theorem.
For $\varepsilon >0$ small and $x \in B_\varepsilon (\mathbf{e}_1) = \{ x \in G_1 \; ; \; d (x,\mathbf{e}_1) < \varepsilon \}$,
\[
\PP \big( \lim_{n \to \infty} d(\varphi_n(x), \mathbf{e}_1) = 0 \big) > 0.
\]
That is, $\mathbf{e}_1 \in A(x)$ for $x \in  B_\varepsilon (\mathbf{e}_1)$.
It now follows from Lemma~\ref{l:Asaddle}  that $\mathbf{e}_1 \in A(x)$ for all $x \in \mathrm{int}(G_1)$.
Lemma~\ref{l:lyapgezero} demonstrates that $\mathbf{e}_1 \not\in A(x)$ in case $\EE \log (3 (1-\Theta)) \ge 0$.
We conclude that for $x \in \mathrm{int} (G_1)$, $\mathbf{e}_1 \in A(x)$ iff $\EE \log \big( 3(1-\Theta)\big) <0$.
The same reasoning can be followed for the edge $C$, so that for any $x \in \mathrm{int}(G_1)$ we have that $C \in A(x)$  precisely if
$\EE \log (\frac{1}{3} + \Theta) < 0$.

The random saddle point $c$ is more difficult to treat.
By earlier calculations, the Lyapunov exponents at $c$ are $\EE \log \big( \frac{3}{2} \Theta \big)$ and $\EE \log \big( 2 - \frac{3}{2} \Theta \big)$.
Under the condition of negative Lyapunov exponents at $c$, the support of $\mu$ has nonempty intersection with both $[0,\frac{2}{3})$ and $(\frac{2}{3},1]$.
Lemma~\ref{l:Asaddle} provides, for given $x \in \mathrm{int}(G_1)$ and $\varepsilon>0$, an integer $N\in \mathbb{N}$ so that
\[
\PP \big( d(\varphi_N(x), c) < \varepsilon \big) > 0.
\]
As before we conclude that for any $x \in \mathrm{int}(G_1)$, we find $c \in A(x)$
under the assumption that both Lyapunov exponents at $c$ are negative.
If one of the Lyapunov exponents is nonnegative, $c \not\in A(x)$ by Lemma~\ref{l:lyapgezero}.
\end{proof}

The following lemma is specific to the family $V_\theta$ and an ingredient for the proof of Theorem~\ref{t:A(x)}.

\begin{lemma}\label{l:Asaddle}
Assume the support of $\mu$ intersects $[0,\frac{2}{3})$.	
	Then for any $\varepsilon>0$ and $x \in \mathrm{int} (G_1)$, there exists $N \in \mathbb{N}$ with
	\[
	\PP \big( d(\varphi_N(x), C) < \varepsilon \big) > 0.
	\]
	
	Assume the support of $\mu$ intersects $(\frac{2}{3},1]$.	
	Then for any $\varepsilon>0$ and $x \in \mathrm{int} (G_1)$, there exists $N \in \mathbb{N}$ with
	\[
	\PP \big( d(\varphi_N(x), \mathbf{e}_1) < \varepsilon \big) > 0.
	\]
	
Assume the support of $\mu$ intersects both $[0,\frac{2}{3})$ and $(\frac{2}{3},1]$.
Then for any $\varepsilon>0$ and $x \in \mathrm{int} (G_1)$, there exists $N \in \mathbb{N}$ with
\[
 	\PP \big( d(\varphi_N(x), c) < \varepsilon \big) > 0.
\]
\end{lemma}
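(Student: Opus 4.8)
The plan is to establish each of the three statements by constructing, for a given $x\in\inte(G_1)$ and $\varepsilon>0$, an explicit finite word in the maps $V_\theta$ (with $\theta$-values drawn from the support of $\mu$) that pushes $x$ into the $\varepsilon$-ball around the designated fixed point, and then to argue that such a word is realized with positive probability. The positive-probability step is routine once the word is found: if $\theta^\ast\in\operatorname{supp}(\mu)$ and $V_{\theta^\ast}$ moves a point into the interior of a target set, then by continuity of $(\theta,x)\mapsto V_\theta(x)$ the same happens for all $\theta$ in a small interval around $\theta^\ast$, which has positive $\mu$-measure; composing finitely many such steps and using independence of the $\Theta_n$ gives the claim. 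So the real content is the deterministic dynamics of a single $V_\theta$ on $\inte(G_1)$ for $\theta$ on one side of $\tfrac23$, together with a continuity/openness argument at the end.

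For the first statement I would use $\theta_0\in\operatorname{supp}(\mu)\cap[0,\tfrac23)$. By \cite{JKL17} (recalled in the introduction: for $\theta<\tfrac23$, $C$ is attracting and positive orbits in $\inte(G_1)$ converge to $C$), the orbit $V_{\theta_0}^n(x)\to C$, so there is $N$ with $d(V_{\theta_0}^N(x),C)<\varepsilon/2$; choosing a small interval $I\ni\theta_0$ with $\mu(I)>0$ and using uniform continuity of $(\theta_1,\dots,\theta_N)\mapsto V_{\theta_N}\circ\cdots\circ V_{\theta_1}(x)$ on $I^N$ gives $\PP(d(\varphi_N(x),C)<\varepsilon)\ge\mu(I)^N>0$. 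The second statement is symmetric, using $\theta_0\in\operatorname{supp}(\mu)\cap(\tfrac23,1]$ and the fact that for $\theta>\tfrac23$ positive orbits in $\inte(G_1)$ converge to $\mathbf{e}_1$.

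The third statement — reaching the saddle $c$ — is the main obstacle, since $c$ has a one-dimensional stable manifold for every $V_\theta$ (the edge $\Gamma_{12}=\{y_3=0\}$ is invariant and $c$ attracts along it for $\theta<\tfrac23$; on the edge $M_{12}$ the relevant exponent $2-\tfrac32\theta$ governs motion toward $C$) and a generic deterministic orbit in the open triangle never converges to $c$. The idea is to exploit that $\operatorname{supp}(\mu)$ meets both sides of $\tfrac23$: pick $\theta_-<\tfrac23$ and $\theta_+>\tfrac23$ in the support. Using $V_{\theta_-}$ a large number of times drags $x$ toward $C$ and in particular into a small neighborhood of the edge $M_{12}$ near $C$ — more precisely, along the way the orbit stays close to $M_{12}$, and I would instead argue more carefully: one first uses $V_{\theta_+}$ to move toward $\mathbf{e}_1$ but staying near the edge $\Gamma_{12}$ (which contains $c$) — the point is that the family, unlike a single map, can track one invariant edge and then switch. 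Concretely, I expect the clean route is: the segment of $\Gamma_{12}$ between $\mathbf{e}_1$ and $c$ is repelled from $\mathbf{e}_1$ and attracted to $c$ when $\theta>\tfrac23$ is not the governing direction — so actually on $\Gamma_{12}$ the relevant eigenvalue at $c$ is $\tfrac32\theta$, which is $<1$ for $\theta<\tfrac23$, meaning $V_{\theta_-}$ contracts $\Gamma_{12}$ toward $c$. So the plan for part three is: (1) apply $V_{\theta_+}^{k}$ with $\theta_+>\tfrac23$ to bring $x$ close to the edge $\Gamma_{12}$ (since for $\theta>\tfrac23$ orbits converge to $\mathbf{e}_1$, but the $x_3$-coordinate contracts at rate $3(1-\theta)<1$, so after finitely many steps $x$ is $\eta$-close to $\Gamma_{12}$ while having $x_1$ not yet too close to $1$); (2) then apply $V_{\theta_-}^{m}$ with $\theta_-<\tfrac23$: on $\Gamma_{12}$ this contracts toward $c$ (eigenvalue $\tfrac32\theta_-<1$ at $c$ along the edge), and by continuity and invariance of a neighborhood of $\Gamma_{12}$ the nearby orbit is carried into a small neighborhood of $c$. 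The delicate quantitative point is ordering and matching the scales $\eta,k,m$ so that step (2) does not first push the point past $c$ toward $\mathbf{e}_1$ along $\Gamma_{12}$ or escape the $\Gamma_{12}$-neighborhood; this is where one must use that on $\Gamma_{12}$ the fixed point $c$ is a global attractor for $V_{\theta_-}$ restricted to the open arc $\Gamma_{12}\setminus\{\mathbf{e}_1\}$ (equivalently the logistic-type map $t\mapsto$ the induced map on the edge has $c$ as attracting fixed point for $\theta_-<\tfrac23$), so that once step (1) has placed the point in a compact subarc of $\Gamma_{12}\setminus\{\mathbf{e}_1\}$ (shadowed within $\eta$), step (2) with $m$ large finishes the job; then the positive-probability conclusion follows as before by perturbing each $\theta$ in the word within $\operatorname{supp}(\mu)$ and invoking independence.
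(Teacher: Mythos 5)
Your treatment of the first two statements (reaching $C$ and $\mathbf{e}_1$) is fine and matches the paper: cite the deterministic convergence result from \cite{JKL17} for a single $\theta_0$ in the relevant part of the support, then use joint continuity of $(\theta_1,\dots,\theta_N)\mapsto V_{\theta_N}\circ\cdots\circ V_{\theta_1}(x)$ together with independence of the $\Theta_n$.

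The third statement is where your argument has a genuine gap, and it is precisely the part the paper works hardest to handle. Your step (1) claims that iterating $V_{\theta_+}$ ($\theta_+>\tfrac23$) from a generic $x\in\mathrm{int}(G_1)$ produces a point that is $\eta$-close to $\Gamma_{12}$ while $x_1$ stays bounded away from $1$. This is not true. The linearization $DV_{\theta_+}(\mathbf{e}_1)$ restricted to $T_{\mathbf{e}_1}\Delta^2$ is the scalar $3(1-\theta_+)$ times the identity, so \emph{both} transverse coordinates $x_2$ and $x_3$ (and hence $1-x_1=x_2+x_3$) contract at the same rate near $\mathbf{e}_1$, and to leading order the ratio $x_3:x_2$ is preserved along the orbit. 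Thus the orbit approaches $\mathbf{e}_1$ and $\Gamma_{12}$ simultaneously, along a direction essentially dictated by the initial $x_2:x_3$; if $x$ is not already close to $\Gamma_{12}$, no number of iterates of $V_{\theta_+}$ will put it near $\Gamma_{12}$ away from $\mathbf{e}_1$. Without some mechanism that rotates the orbit toward the privileged direction, the plan cannot start. (Separately, your appeal to ``invariance of a neighborhood of $\Gamma_{12}$'' in step (2) is also unavailable: for $\theta_-<\tfrac23$ the transverse eigenvalues along $\Gamma_{12}$ are $>1$ at both $\mathbf{e}_1$ and $c$, so nearby orbits escape any fixed neighborhood; a shadowing estimate could be arranged, but only once step (1) is repaired.)

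The paper's proof supplies exactly the missing rotation mechanism, and it is a genuinely second-order effect that a linearization argument cannot see. Working in coordinates centered at $C$, the paper computes a quadratic normal form for $F_\theta$, applies the coordinate change $H_\theta$ that linearizes $F_\theta$ to second order, and then evaluates the conjugated maps $Q_\nu=H_\theta^{-1}\circ F_\nu\circ H_\theta$ on rays $y=kx$ through $C$. The key computation shows that for $\theta>\tfrac23$ and $\nu<\tfrac23$ the slope $k$ is strictly increased by a composition $V_\theta^M\circ V_\nu^N$ that returns the point to a fundamental annulus near $C$. Iterating this ``in-and-out'' scheme drives $k\to 1$, i.e.\ drives the orbit onto a ray arbitrarily close to $M_{12}$; a final burst of $V_\theta$ iterates then travels along $M_{12}$ from near $C$ toward $c$ (where $c$ is attracting on $M_{12}$ for $\theta>\tfrac23$). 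So the route to $c$ is through $C$ and $M_{12}$, not through $\Gamma_{12}$, and the crux is the quadratic drift of the angular coordinate near $C$, which your argument does not reproduce.
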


\begin{proof}
	Consider first the fixed point $\mathbf{e}_1$.
	It follows from \cite{JKL17} that for $x \in \mathrm{int}(G_1)$ there exists $\theta$ in the support of $\mu$ and $N\in\mathbb{N}$
	so that $V_\theta^N (x) \in B_\varepsilon(\mathbf{e}_1)$. By continuity of $V_\theta$ in $\theta$, we find that for given $x \in \mathrm{int}(G_1)$ there is $N\in\mathbb{N}$ so that
	$
	\PP \big( d(\varphi_N(x), \mathbf{e}_1) < \varepsilon \big) > 0.
	$
	The statement on $C$ goes similarly.
	
	To prove the statement on $c$,
	we start with an analysis of random iterates near $C$.
	Put $x_3 = 1 - x_1 - x_2$ in $V_\theta$ and consider the resulting map
	\begin{align*}
	F_\theta(x_1,x_2) &=
	\\
	\MoveEqLeft
	\left(
	\begin{array}{c}
	x_1 \left( x_1^2 + 3 \theta x_1 (1-x_1) + 3(1-\theta) (x_2^2 + (1-x_1-x_2)^2) + 2 x_2(1-x_1-x_2)\right) \\ x_2 \left(x_2^2 + 3 \theta x_2(1-x_2) + 3(1-\theta) (x_1^2 + (1-x_1-x_2)^2) + 2 x_1 (1-x_1-x_2)  \right)
	\end{array}
	\right).
	\end{align*}


	Compute $DF_\theta (\frac{1}{3},\frac{1}{3}) =  (\frac{1}{3} + \theta) \left( \begin{array}{cc}  1 & 0 \\ 0 & 1 \end{array}\right) $.
	The Hessian of the first component $F_{\theta,1}$ of $F$  equals
	\[
	\frac{1}{3} (4- 6 \theta) \left( \begin{array}{cc}  -1 & 1 \\ 1 & 2 \end{array}\right).
	\]
	Likewise the Hessian of $F_{\theta,2}$ equals
	\[
	\frac{1}{3} (4- 6 \theta) \left( \begin{array}{cc}  2 & 1 \\ 1 & -1 \end{array}\right).
	\]
	So the second order Taylor expansion of $F_\theta$ around $(\frac{1}{3},\frac{1}{3})$ equals, in terms of $x = x_1-\frac{1}{3}, y = x_2 - \frac{1}{3}$,
	\[
	T_\theta (x,y) =  (\frac{1}{3} + \theta) \left( \begin{array}{c} x \\ y \end{array} \right) + \frac{1}{3} (2 - 3 \theta) \left( \begin{array}{c} - x^2 +  2xy + 2y^2 \\  2x^2 + 2xy - y^2 \end{array} \right).
	\]
	Denote $\lambda_\theta =  \frac{1}{3} + \theta$.
	Write $T_\theta (x,y)  = \lambda_\theta \left( \begin{array}{c} x \\ y \end{array} \right) + P_\theta (x,y)$.
	Consider a local coordinate transformation $H_\theta (x,y) =  \left( \begin{array}{c} x \\ y \end{array} \right) + U_\theta (x,y)$
	with
	$U_\theta (x,y) = \frac{-1}{\lambda_\theta - \lambda_\theta^2} P_\theta (x,y)$.\\


	Calculate
	\begin{align*}
	H_\theta^{-1} \circ F_\nu \circ H_\theta  (x,y) &=  H_\theta^{-1} \left[  \lambda_\nu H_\theta (x,y) + P_\nu \circ H_\theta(x,y) \right]
	\\
	&=  H_\theta^{-1} \left[  \lambda_\nu \left( \begin{array}{c} x \\ y \end{array} \right) + \lambda_\nu U_\theta (x,y) + P_\nu (x,y) + \mathcal{O}(3) \right]
	\\
	&= \lambda_\nu \left( \begin{array}{c} x \\ y \end{array} \right) +  \lambda_\nu   U_\theta (x,y) - U_\theta (\lambda_\nu (x,y))
	+ P_\nu (x,y) + \mathcal{O}(3)
	\\
	&= \lambda_\nu \left( \begin{array}{c} x \\ y \end{array} \right) + (\lambda_\nu - \lambda_\nu^2) U_\theta (x,y) + P_\nu (x,y) +\mathcal{O}(3)
	\\
	&= \lambda_\nu \left( \begin{array}{c} x \\ y \end{array} \right) + P_\nu (x,y) - \frac{\lambda_\nu - \lambda_\nu^2}{\lambda_\theta - \lambda_\theta^2} P_\theta (x,y)+\mathcal{O}(3).
	\end{align*}
	Here $\mathcal{O}(3)$ stands for terms of at least third order in $(x,y)$.
	Consequently  $H_\theta^{-1} \circ F_\theta \circ H_\theta$ is linear up to quadratic order.
	We note that there is in fact a coordinate change, equal to $H_\theta$ up to quadratic order, that smoothly linearizes $F_\theta$.
	The formulas show the effect of the same coordinate change $H_\theta$ on  $F_\nu$.
	
	Consider lines $y = kx$ with $-\frac{1}{2} \le k \le 1$.
	The line with $k=-\frac{1}{2}$ contains $M_{23}$,
	the line with $k=1$ contains $M_{12}$.
	So the collection of lines covers $G_1$.
	We will calculate $Q_\nu = H_\theta^{-1} \circ F_\nu \circ H_\theta$ in the point $(x,kx)$.  Note first
	\begin{align*}
	Q_\nu (x,y) &=   (\frac{1}{3} + \nu) \left( \begin{array}{c} x \\ y \end{array} \right) + \\
	& \qquad  \left[ \frac{1}{3} (2-3\nu) - \frac{  (\frac{1}{3} + \nu) -  (\frac{1}{3} + \nu)^2}{ (\frac{1}{3} + \theta) - (\frac{1}{3} + \theta)^2} \frac{1}{3} (2-3\theta) \right]
	\left( \begin{array}{c} -x^2 + 2 xy + 2y^2 \\ 2 x^2 + 2xy - y^2 \end{array} \right),
	\end{align*}
	ignoring terms of order three.
	The expression on the right hand side can be simplified to
	\begin{align*}
	Q_\nu (x,y) &=  (\frac{1}{3} + \nu) \left( \begin{array}{c} x \\ y \end{array} \right)
	+ \frac{1}{3} (2 - 3 \nu) \left[ \frac{\theta - \nu}{\frac{1}{3} + \theta} \right] \left( \begin{array}{c} -x^2 + 2 xy + 2y^2 \\ 2 x^2 + 2xy - y^2 \end{array} \right).
	\end{align*}
	So
	\begin{align*}
	Q_\nu (x,kx) &= (\frac{1}{3} + \nu) \left( \begin{array}{c} x \\ kx \end{array} \right)
	+ \frac{1}{3} (2 - 3 \nu) \left[ \frac{\theta - \nu}{\frac{1}{3} + \theta} \right]  x^2 \left( \begin{array}{c} -1 + 2 k + 2k^2 \\ 2 +2k - k^2 \end{array} \right).
	\end{align*}
	It follows that a point $(x,kx)$ is mapped to a point $(u, v)$ with
	\[
	v/u = k - x \left(  \frac{1}{3} (2-3\nu) \frac{\theta-\nu}{(\frac{1}{3} + \nu) (\frac{1}{3} + \theta)} \right)   (k-1) (2k+1) (k+2) + \mathcal{O}(2) (k-1)(2k+1),
	\]
	where $\mathcal{O}(2)$ here are terms of at least quadratic order in $x$.
	
	Assume $\frac{2}{3} < \theta < 1$ (so that $C$ is unstable for $V_\theta$) and $\nu < \frac{2}{3}$ ($C$ is stable for $V_\nu$).
	This is possible by the assumption on the support of $\mu$.
	We then find that (for $x>0$ small and $-\frac{1}{2}<k<1$) $v/u > k$.
	
	Take a fundamental domain $D = B_\delta(C) \setminus V^{-1}_\theta (B_\delta (C))$ for $V_\theta$. Here $\delta$ is a small positive number,
	so that $D$ is close to $\mathbf{e}_1$.
	Take any $p \in \textrm{int} (G_1)$.  A number of iterates $V_\nu^N$ maps $p$ to a point close to $C$, which we may assume to be in $D$
	by composing with iterates of $V_\theta$ if needed.
	For $q \in D$ and $N > 0$, let $M \in \mathbb{N}$ be so that $V_\theta^M \circ V_\nu^N (q) \in D$. If $N$ is large enough we know $M>0$.
	By the previous calculations, if $q$ lies on the line $y = kx$ then $V_\theta^M \circ V_\nu^N (q)$ lies on a line $y = \ell x$ for $\ell > k$.
	Several such compositions map a point in $D$ to a point still in $D$ but now close to $M_{12}$
	(namely on a line $y = \ell x$ for an $\ell$ close to $1$).
	Finally, there is an iterate $V_\theta^O$ that maps the last point to a point close to $c$.
	By continuity of the maps $V_\theta$ in $\theta$, the same holds true if we take nearby parameter values from small balls
	around $\nu$ and $\theta$.
\end{proof}	

The next lemma discusses consequences of positive or vanishing Lyapunov exponents at the fixed points.
The analysis of dynamics near a fixed point with vanishing Lyapunov exponents is the more delicate case, compare also
the study of random interval diffeomorphisms in \cite{GH}.

\begin{lemma}\label{l:lyapgezero}
	Assume $\EE \log (3 (1-\Theta)) \ge 0$.
	Let $x \in \mathrm{int}(G_1)$.
	For $\varepsilon>0$ small,
	\[
	\PP ( d(\varphi_n(x) ,\mathbf{e}_1) >  \varepsilon  \textrm{ for some } n \in \mathbb{N}) = 1.
	\]
	
	Assume $\EE \log (\frac{1}{3} + \Theta) \ge 0$.
	Let $x \in \mathrm{int}(G_1)$.
	For $\varepsilon>0$ small,
	\[
	\PP ( d(\varphi_n(x) , C) >  \varepsilon  \textrm{ for some } n \in \mathbb{N}) = 1.
	\]
	
	Assume  $\EE \log (\frac{3}{2} \Theta) \ge 0$  or $\EE \log (2 - \frac{3}{2} \Theta) \ge 0$.
	Let $x \in \mathrm{int}(G_1)$.
	For $\varepsilon>0$ small,
	\[
	\PP ( d(\varphi_n(x) , c) >  \varepsilon  \textrm{ for some } n \in \mathbb{N}) = 1.
	\]
\end{lemma}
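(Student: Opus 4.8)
\emph{Proof proposal.} The plan is to reduce the assertion, near each fixed point $p\in\{\mathbf{e}_1,C,c\}$, to a one-dimensional recursion for a nonnegative ``radial'' coordinate $\rho$ attached to an eigendirection whose Lyapunov exponent is nonnegative, and then to combine a submartingale estimate with the recurrence of centred random walks. In each case $\rho$ is strictly positive along orbits started in $\mathrm{int}(G_1)$ (which is forward invariant under every $V_\theta$) and satisfies a \emph{factored} recursion $\rho_{n+1}=\rho_n\bigl(\lambda(\Theta_{n+1})+r_n\bigr)$ with $\lambda$ the relevant eigenvalue and $|r_n|\le C\,d(\varphi_n(x),p)$, the constant $C$ being uniform in $\theta$ since $V_\theta$ is polynomial in $(x,\theta)$. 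Explicitly, using the functions $W_i$ (depending on $\theta$) from the proof of Theorem~\ref{t:A}, for which $(V_\theta(x))_i=x_iW_i(x)$: at $\mathbf{e}_1$ take $\rho_n=(\varphi_n(x))_2+(\varphi_n(x))_3$ and $\lambda(\theta)=3(1-\theta)$ (using $W_2(\mathbf{e}_1)=W_3(\mathbf{e}_1)=3(1-\theta)$, together with the structural bound $W_2(x),W_3(x)\ge 3(1-\theta)x_1^2$ which also yields $\rho_{n+1}/\rho_n\ge 3(1-\Theta_{n+1})(1-\rho_n)^2$); at $C$ take $\rho_n=\|\varphi_n(x)-C\|$ in the chart $F_\theta$ of Lemma~\ref{l:Asaddle} and $\lambda(\theta)=\tfrac13+\theta$ (using that $DF_\theta(C)=(\tfrac13+\theta)\,\mathrm{Id}$ is scalar, so $\rho_{n+1}=\lambda(\Theta_{n+1})\rho_n+\mathcal{O}(\rho_n^2)$); at $c$ take either $\rho_n=(\varphi_n(x))_3$ with $\lambda(\theta)=2-\tfrac32\theta$ (from $(V_\theta(x))_3=x_3W_3(x)$, $W_3(c)=2-\tfrac32\theta$) or $\rho_n=(\varphi_n(x))_1-(\varphi_n(x))_2$ with $\lambda(\theta)=\tfrac32\theta$ (from the simplex identity $(V_\theta(x))_1-(V_\theta(x))_2=(x_1-x_2)\bigl(W_1(x)+x_2(3\theta-2)\bigr)$, which rests on $W_1-W_2=(3\theta-2)(x_1-x_2)$, and $W_1(c)+\tfrac12(3\theta-2)=\tfrac32\theta$). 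Verifying these factorisations is a direct computation and is where the special structure of $V_\theta$ enters.

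Two preliminary observations. First, since $\lambda(\theta)=1$ only at $\theta=2/3$ in each of these four cases, the hypothesis $\EE\log\lambda(\Theta)\ge 0$ together with $\mu\neq\delta_{2/3}$ (if $\mu=\delta_{2/3}$ the statement is trivial, the orbit being constant) forces $\EE\lambda(\Theta)=\EE\, e^{\log\lambda(\Theta)}>1$ — by strict Jensen when $\log\lambda(\Theta)$ is non-degenerate, and because $e^{\EE\log\lambda(\Theta)}>1$ when it is a nonzero constant. Put $\eta=\EE\lambda(\Theta)-1>0$. Second, fix $\varepsilon>0$ so small that the factored recursion and $|r_n|\le C\varepsilon$ hold on $B_\varepsilon(p)$ and $C\varepsilon<\eta/2$, and let $\tau=\inf\{n\ge 0:d(\varphi_n(x),p)>\varepsilon\}$; writing $\F_n=\sigma(\Theta_1,\dots,\Theta_n)$, on $\{n<\tau\}$ one gets $\EE[\rho_{n+1}\mid\F_n]\ge\rho_n(\EE\lambda(\Theta)-C\varepsilon)\ge(1+\tfrac{\eta}{2})\rho_n$. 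Hence the stopped process $\rho_{n\wedge\tau}$ is a nonnegative bounded submartingale (one step out of $B_\varepsilon(p)$ overshoots by at most a bounded factor, $\lambda$ being bounded) whose expected increment is at least $\tfrac{\eta}{2}\rho_n$ while $n<\tau$; telescoping expectations yields $\sum_{n\ge 0}\EE[\rho_n\mathbf{1}_{\{n<\tau\}}]<\infty$, so $\sum_{n<\tau}\rho_n<\infty$ a.s., and in particular $\sum_{n\ge 0}\rho_n<\infty$ a.s.\ on $\{\tau=\infty\}$.

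Now suppose $\PP(\tau=\infty)>0$. On $\{\tau=\infty\}$ the orbit stays in $B_\varepsilon(p)$, so $\log\rho_n$ is bounded above. At $\mathbf{e}_1$ and $C$ the one-sided estimates above give $\rho_{n+1}/\rho_n\ge\lambda(\Theta_{n+1})(1-C'\rho_n)$, hence $\log\rho_n\ge\log\rho_0+S_n-C'\sum_{k<n}\rho_k=S_n-\mathcal{O}(1)$ on $\{\tau=\infty\}$, where $S_n=\sum_{k=1}^n\log\lambda(\Theta_k)$ is a sum of i.i.d.\ terms of mean $\EE\log\lambda(\Theta)\ge 0$ (the terms are integrable: $\lambda\le 3$ is bounded above and the mean is $\ge0$). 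At the saddle $c$ there is in addition a cross term from the other eigendirection — e.g.\ for $\rho_n=(\varphi_n(x))_3$ one finds $\rho_{n+1}/\rho_n\ge\lambda(\Theta_{n+1})\bigl(1-C'(\rho_n+u_n^2)\bigr)$ with $u_n=(\varphi_n(x))_1-(\varphi_n(x))_2$ — and one needs $\sum_n u_n<\infty$ on $\{\tau=\infty\}$; this holds either by the submartingale argument of the previous paragraph applied to $u_n$ (when $\EE(\tfrac32\Theta)>1$, which covers the case $\EE\log(\tfrac32\Theta)\ge 0$), or, when $\EE\log(\tfrac32\Theta)<0$, by exponential decay of $u_n$ (from $u_{n+1}\le u_n(\tfrac32\Theta_{n+1}+C\varepsilon)$ and $\EE\log(\tfrac32\Theta+C\varepsilon)<0$ for $\varepsilon$ small). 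In all cases, then, $\log\rho_n\ge S_n-\mathcal{O}(1)$ on $\{\tau=\infty\}$. If $\EE\log\lambda(\Theta)>0$ the strong law gives $S_n\to+\infty$; if $\EE\log\lambda(\Theta)=0$ then $\log\lambda(\Theta)$ is non-degenerate (else $\mu=\delta_{2/3}$) and the classical oscillation of centred random walks gives $\limsup_n S_n=+\infty$ a.s. Either way $\limsup_n\log\rho_n=+\infty$ on $\{\tau=\infty\}$, contradicting boundedness. Hence $\PP(\tau=\infty)=0$, which is precisely the claimed statement (taking $p=\mathbf{e}_1$, $p=C$, $p=c$ for the three displays; for $c$ the hypotheses $\EE\log(\tfrac32\Theta)\ge0$ and $\EE\log(2-\tfrac32\Theta)\ge0$ are treated by the two choices of $\rho$).

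The step I expect to be the main obstacle is the case of a \emph{vanishing} Lyapunov exponent at $p$: there $(S_n)$ is only neutrally recurrent, so there is no room for an exponential-escape argument and the nonlinear remainders must be genuinely accounted for rather than absorbed. What makes it work is to run the (sub)martingale estimate on the \emph{un-logged} coordinate $\rho_n$ — this converts ``nonnegative expected multiplicative growth'' into the summability $\sum_{n<\tau}\rho_n<\infty$, after which the problem reduces to the textbook recurrence of mean-zero random walks. The accompanying technical points — the exact factored forms of $\rho_{n+1}$ (in particular the identity $W_1-W_2=(3\theta-2)(x_1-x_2)$ and the scalar form of $DF_\theta(C)$) and the control of the cross-coupling of the two eigendirections at the saddle $c$ — are routine but need care; the analysis near a fixed point with zero Lyapunov exponent here is in the same spirit as the study of random interval diffeomorphisms at neutral fixed points in \cite{GH}.
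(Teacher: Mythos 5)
Your proposal is correct, but the mechanism you use for the critical case of a vanishing Lyapunov exponent is genuinely different from the paper's. The paper works throughout on the logarithmic scale: it bounds $\log\rho_n$ by a random walk $w_{n+1}=w_n+\log\lambda(\Theta_{n+1})+t(w_n)$ with an exponentially small negative perturbation $t(w)=-Ce^{w}$, and proves that this walk still reaches level $A_0$ almost surely via a Lamperti-type argument: the Lyapunov function $V(w)=\log(-w)$ turns the stopped walk into a nonnegative supermartingale, the key estimate being a second-order Taylor expansion of $\log(1+x)$ together with a decomposition $\Gamma=\Gamma_1+\Gamma_2$ of the increment into a bounded and an unbounded centred part, so that the variance term dominates the perturbation because $w\,t(w)\to 0$. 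You instead exploit strict Jensen on the \emph{un-logged} coordinate: $\EE\log\lambda(\Theta)\ge 0$ with $\mu\neq\delta_{2/3}$ forces $\EE\lambda(\Theta)>1$, so $\rho_{n\wedge\tau}$ is a strict submartingale inside $B_\varepsilon(p)$, telescoping gives $\sum_{n<\tau}\rho_n<\infty$, the nonlinear remainders in $\log\rho_n$ become summable, and the conclusion follows from the classical oscillation $\limsup S_n=+\infty$ of a centred, non-degenerate, integrable random walk. Both routes are sound; the paper's is more robust (it needs only mean-zero, upper-bounded increments and a perturbation that is $o(1/|w|)$), while yours is more elementary — no Taylor expansion of the logarithm, no auxiliary decomposition of $\Gamma$ — and it also handles the cross-coupling at the saddle $c$ more directly than the paper's comparison of the perturbed walk $v_n$ with the unperturbed $v^0_n$. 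Your supporting identities check out: $W_1-W_2=(3\theta-2)(x_1-x_2)$, $W_1(c)+\tfrac12(3\theta-2)=\tfrac32\theta$, and $W_2,W_3\ge 3(1-\theta)x_1^2$ are all correct, and the last one is exactly what is needed to get the one-sided multiplicative bound at $\mathbf{e}_1$ uniformly in $\theta$ even when $3(1-\theta)$ is close to $0$. The one place that deserves an explicit line is the analogous one-sided bound for $\rho=u_1$ at $c$ when $\gamma_1=\tfrac32\theta$ is close to $0$: there a crude bound $\gamma_1-C(u_1^2+u_2^2)\ge\gamma_1(1-C'(u_1^2+u_2^2))$ fails uniformly in $\theta$, but the exact factorisation $u_1(V_\theta(x))=u_1\bigl(\gamma_1+(1-\gamma_1)(u_1^2+3u_2^2)\bigr)$ saves you, since the quadratic correction is nonnegative precisely when $\gamma_1<1$; this is consistent with your remark that the factored forms are where the special structure of $V_\theta$ enters, but it should be spelled out.
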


\begin{proof}
	We first prove the statement on $\mathbf{e}_1$.
	Let $r (x) $ be the Euclidean distance of $x$ to $\mathbf{e}_1$.
	Define $s:\mathrm{int}(G_1)\times [0,1] \to (0, \infty]$ such that
	\begin{align}\label{e:logr}
	\log r (V_\theta (x)) &= \log r (x) + \log (3 (1-\theta)) +  \log s (x,\theta).
	\end{align}
	Then, denoting $\delta:=1-x_1$ and $\gamma:=1-\theta$, we get
	$$
	s^2(x,\theta)=\frac 1{9\gamma^2 r^2(x)}\Big(\Big(1-\big(V_\theta (x)\big)_1\Big)^2+\Big(\big(V_\theta (x)\big)_2\Big)^2+ \Big(\big(V_\theta (x)\big)_3\Big)^2\Big),
	$$
	where
	\begin{align*}
	1-\big(V_\theta (x)\big)_1&=f_1(x)+\gamma g_1(x),\\
	\big(V_\theta (x)\big)_2&=f_2(x)+\gamma g_2(x),\\
	\big(V_\theta (x)\big)_3&=f_3(x)+\gamma g_3(x),
	\end{align*}
	with
	\begin{align*}
	f_1(x)&=3\delta^2 - 2\delta^3 - 2 (1-\delta)x_2x_3, \\
	g_1(x)&=3(1-\delta)(\delta-r^2(x)), \\
	f_2(x)&=x_2^3+3x_2^2(x_3+1-\delta)+2x_3(1-\delta)x_2,\\
	g_2(x)&=3x_2\big(x_3^2+(1-\delta)^2-x_2(x_3+1-\delta)\big),\\
	f_3(x)&=x_3^3+3x_3^2(x_2+1-\delta)+2x_3(1-\delta)x_2, \\
	g_3(x)&=3x_3\big(x_2^2 +(1-\delta)^2 -x_3(x_2+1-\delta)\big).
	\end{align*}
	Note that $f_i(x)\ge 0$ for all $i \in \{1,2,3\}$ and $x \in \mathrm{int}(G_1)$ (consider the case $\gamma=0$).
	Also, $g_i (x) \ge 0$ for all $i \in \{1,2,3\}$, at least for $r(x)$ small.
	Therefore,
	\begin{align*}
	s^2(x,\theta)&\ge s^2(x,0)=\frac 1{9r^2(x)}\sum_{i\in \{1,2,3\}} (f_i(x)+g_i(x))^2 \\
	&\ge\frac 1{9r^2(x)} \Big(9\delta^2 + 9 x_2^2 + 9x_3^2-O(r^4(x))\Big)=1-O(r^2(x)).
	\end{align*}
	
	
	We can therefore define $t(w) = -Ce^{w}$ with $C>0$ so that $t(\log r(x)) \le \log s(x,\theta)$ for all $\theta$ and $r(x) < \varepsilon$.
	Here $A_0 = \log (\varepsilon)$ is assumed to be a large negative number.
	Consider the random walk on the line
	\begin{align}\label{e:rww}
	w_{n+1} = w_n + \log(3(1-\Theta_{n+1})) + t (w_n).
	\end{align}
	We compare orbits of \eqref{e:rww} with $\varphi_n (x)$.
	Whenever $w_i < \log (\varepsilon)$, $0 \le i \le n$ and $w_{n+1} > \log (\varepsilon)$,
	we have $r(\varphi_{n+1} (x)) >\varepsilon$.
	So we must show that there exists some $A_0 \in \R$ such that for every deterministic initial condition $w_0 < A_0$, we have
	\[
	\PP(w_n \ge A_0 \textrm{ for some } n \in \mathbb{N}) = 1.
	\]
	This is standard if $\EE \log(3(1-\Theta)) >0$. We assume now the case of vanishing Lyapunov exponents $\EE \log(3(1-\Theta)) = 0$.
	
	Fix $A_0 \le -1$ and consider the stopped walk $W_{n}:=w_{n\wedge \tau}$, where $\tau:= \inf\{m \in \N:\,w_m\ge A_0\}$.
	We define the {\em Lyapunov function} $V(x)=\log (-x)$, $x \le -1$. We claim that
	\begin{equation}\label{supermart}
	\EE  \big(V(W_{n+1})|\F_n\big) \le V(W_n) \mbox{ a.s.~on the set } \{W_n<A_0\},
	\end{equation}
	where $\F_n$ denotes the $\sigma$-algebra generated by $W_1,\,W_2,\cdots,W_n$.
	This means that the process $n \mapsto W_n$ is a nonnegative supermartingale which therefore converges almost surely to
	an integrable random variable  $W_\infty$ which immediately implies that $\tau<\infty$ almost surely since $\log(3(1-\Theta))$ is not deterministic.\\
	

	Let us prove \eqref{supermart}. All we have to show is that
	$$
	\EE \log \Big(1-\frac {\Gamma +t(w)}{-w} \Big) \le 0
	$$
	for all sufficiently small $w\le -1$ where $\Gamma=\log(3(1-\Theta))$. In fact all we will use about $\Gamma$ is that is has mean 0 and  is not
	almost surely equal to 0 and bounded from above. Note that due to the upper boundedness of $\Gamma$ the term inside the $\log$ is almost surely positive
	whenever $w$ is sufficiently small.
	
	We decompose $\Gamma$ as $\Gamma=\Gamma_1+\Gamma_2$, where $\Gamma_1:=\Gamma \eins_{U}$ and $\Gamma_2:=\Gamma \eins_{U^c}$ and where the set $U$ is chosen such
	that $\EE \Gamma_1=\EE \Gamma_2=0$ and $\Gamma_1$ is bounded and not identically 0 (it may be necessary to enlarge the probability space for such a
	decomposition to exist).
	
	Now,
	\begin{equation}\label{terms}
	\EE \log \Big(1-\frac {\Gamma +t(w)}{-w} \Big)=\EE \log\Big(1-\frac {\Gamma_1 +t(w)}{-w} \Big)+ \EE \log \Big( 1-\frac{\Gamma_2}{-w-\Gamma_1-t(w)}\Big).
	\end{equation}
	The second term can be estimated as follows:
	$$
	\EE \log \Big( 1-\frac{-\Gamma_2}{-w-\Gamma_1-t(w)}\Big)\le \EE \frac{\Gamma_2}{-w-\Gamma_1-t(w)}=-\frac 1{-w-t(w)}\EE \big(\Gamma \eins_U\big)=0.
	$$
	It remains to estimate the first term on the right hand side of \eqref{terms}.
	
	For $\varepsilon \in (0,1)$ there exists $\delta >0$ such that
	$$
	\log(1+x)\le x -\frac 12 (1-\varepsilon)x^2, \mbox{ for all } |x|\le \delta.
	$$
	Therefore, for $w\le -1$ sufficiently small we have
	\begin{align*}
	\EE \log \Big(1-\frac {\Gamma_1 +t(w)}{-w} \Big)&\le  -  \frac 1{-w}\big(\EE \Gamma_1 + t(w)\big)
	- \frac 12 (1-\varepsilon) \EE \Big(\frac {\Gamma_1 +t(w)}{-w}\Big)^2\\
	&= \frac {-t(w)}{-w}-\frac 1{2w^2} (1-\varepsilon)\big(\EE \Gamma_1^2+t(w)^2\big)
	\end{align*}
	which is smaller than 0 for $w$ sufficiently small since $\EE \Gamma_1^2>0$ and $\lim_{w \to -\infty}w\,t(w)=0$. This finishes the proof of the first statement in the lemma.
	
	The corresponding statement concerning $C$ is proved similarly (it is in fact easier as the eigenvalues of $D V_\theta (C)$ are bounded away from zero uniformly in $\theta$) and will not be included here.
	
	Finally we prove the statement for $c$.
	Take coordinates $u_1 = u_1(x) = x_1-x_2$, $u_2 = u_2(x) = 1 - x_1-x_2$ for which $u_1 (c) = u_2 (c) = 0$.
	Note that $u_1 \ge 0$ and $u_2 \ge 0$ on $G_1$.
	Writing
	$\gamma_1 = \frac 32 \theta$, $\gamma_2 = 2 - \frac 32 \theta$,
	a calculation shows
	\begin{align*}
	u_1 ( V_\theta (x) ) &=
	\gamma_1 u_1 (1 - u_1^2 - 3 u_2^2 ) + u_1 (u_1^2 + 3 u_2^2),
	\\
	u_2 ( V_\theta (x) ) &=
	\gamma_2 u_2 ( 1 - 4 u_2 + u_1^2 + 3  u_2^2) + u_2 ( 4 u_2 - u_1^2 - 3 u_2^2).
	\end{align*}
	Let $s_1 (x), s_2 (x)$ be defined by
	\begin{align*}
	\log (u_1 ( V_\theta (x) )) &= \log (u_1(x))  + \log (\gamma_1) + \log (s_1 (x)),
	\\
	\log (u_2 ( V_\theta (x) )) &= \log (u_2(x))  + \log (\gamma_2) + \log (s_2 (x)).
	\end{align*}
	We find $s_1 (x) \ge 1 - O ( \| (u_1,u_2) \|^2)$, $s_2 (x) = 1 - O ( \| (u_1,u_2) \|^2)$.

	
	Assume for definiteness $\EE \log(\gamma_1) = 0$.  Using Jensen's inequality (see the proof of Theorem~\ref{t:A}) yields  $\EE \log(\gamma_2) <0$.
	The case $\EE \log(\gamma_1) > 0$ and  $\EE \log(\gamma_2) <0$ is treated similarly and also the case  $\EE \log(\gamma_2) \ge 0$ and $\EE \log(\gamma_1) < 0$ is treated similarly.
	
	For constants $k>0$, $L<0$,
	consider the random walk
	\begin{align*}
	w_{n+1} &= (w_n + \log (\gamma_2) + k e^{L} + k e^{w_n}) \wedge 0
	\end{align*}
	We may pick $k>0$ and $L$ large negative so that $\log (u_2 (\varphi_n (x))) \le w_n$ as long as $w_n \le L$.
	In fact, this holds uniformly in $u_1 (\varphi_n(x)) \le L$.
	There is no loss of generality in assuming $L=0$ and $k$ is small positive.
	
	We first consider the values $w_n$.
	For $k$ small enough we have
	$\EE \gamma_2 - 2 k <0$. This implies that $e^{w_n}$ converges to zero almost surely.
	To be specific,
	write $\omega = (\Theta_n)_{n\in\mathbb{N}}$.
	There are $\lambda<1$ and a measurable function $C_\omega$ so that
	for $\mu^\mathbb{N}$ almost all $\omega$, $e^{w_n} \to 0 $ as $n\to \infty$ and
	\[
	e^{w_n} \le C_\omega \lambda^n e^{w_0}.
	\]
	
	Now consider
	\begin{align*}
	v_{n+1} &= (v_n + \log (\gamma_1) - k e^{v_n} - k e^{w_n}) \wedge 0. 
	\end{align*}
	As long as $w_n\le 0$ and $v_n \le 0$, we find $v_n \le \log (u_1(\varphi_n(x)))$.
	We use the exponential convergence of $e^{w_n}$ to zero to treat the iterates $v_n$.
	Assume first $e^{w_0}=0$ and thus $e^{w_n} = 0$ for all $n\in\mathbb{N}$.
	This gives the random walk
	\begin{align} \label{e:vn0}
	v^0_{n+1} &= (v^0_n + \log (\gamma_1) - k e^{v^0_n}) \wedge 0.
	\end{align}
	As in the previous reasoning on iterates near $\mathbf{e}_1$ the following statement is derived:
	there is a set of $\omega$'s of full measure, for which for any given $v_0 \le 0$, there are infinitely many $n$ with $v_n=0$.
	
	Now consider the random walk $v_{n+1} = (v_n + \log (\gamma_1) - k e^{v_n} - k e^{w_n} ) \wedge 0$
	with the term involving $e^{w_n}$ included.
	We obtain
	\[
	|v_m - v^{0}_m| \le \sum_{i=0}^{m-1} e^{w_i} \le C_\omega \frac{\lambda}{\lambda-1} e^{w_0}.
	\]
	Let $v^{0,n}_k$ be the orbit for \eqref{e:vn0} with $v^{0,n}_0 = v_n$.
	Then
	\begin{align}\label{e:vv0}
	|v_{n+k} - v^{0,n}_k| &\le \sum_{i=0}^{k-1} e^{w_{n+k}}  \le C_{\sigma^n \omega} \frac{\lambda}{\lambda-1} e^{w_n}.
	\end{align}
	Let $B>0$ be such that $C_\omega < B$ for positive probability.
	For almost all $\omega$,  there are arbitrary large $n$ with $C_{\sigma^n \omega}<B$.
	The right hand side of \eqref{e:vv0} goes to zero if $n\to \infty$ and $C_{\sigma^n \omega}<B$.
	We already know that with probability one there are infinitely many $k$ with $v^{0,n}_k=0$.
	This suffices to conclude the result.
\end{proof}

\section*{Acknowledgments}
The second author (UJ)  thanks the TU Berlin  for the kind hospitality and for  providing all facilities and the  German Academic Exchange Service (DAAD) for providing financial support by a scholarship.

\end{document}